\documentclass[11pt,a4paper]{article}

\usepackage{amsmath}    
\usepackage{amssymb}    
\usepackage{amsthm}     
\usepackage{graphics}

\usepackage{hyperref}
\hypersetup{
breaklinks,
colorlinks=true,
linkcolor=blue,
pdfborder = 0 0 0,
citecolor=blue,
urlcolor=blue}

\numberwithin{equation}{section}

\makeatletter\@twosidetrue\makeatother

\hoffset=-1.2truecm
\voffset=-1truecm
\setlength{\textheight}{22.8truecm}
\setlength{\textwidth}{16.35truecm}
\setlength{\marginparwidth}{3truecm}

\newtheorem{theorem}{Theorem}[section]
\newtheorem{lemma}[theorem]{Lemma}

\newtheorem{proposition}[theorem]{Proposition}

\theoremstyle{definition}
\newtheorem{definition}[theorem]{Definition}
\newtheorem{remark}[theorem]{Remark}

\def\DH{D \! H}

\title{\vspace{-10mm}
\Large\textbf{An overdetermined problem associated\\ to the Finsler Laplacian}}
\author{\medskip\normalsize Giulio Ciraolo and Antonio Greco}

\date{}

\pagestyle{myheadings}
\markboth{An overdetermined problem associated to the Finsler Laplacian}
{Giulio Ciraolo and Antonio Greco}


\begin{document}

\belowdisplayshortskip = \belowdisplayskip

\paperwidth=210 true mm
\paperheight=297 true mm
\pdfpagewidth=210 true mm
\pdfpageheight=297 true mm

\maketitle

\vspace{-5mm}

\pdfbookmark[2]{Abstract}{Abstract}
\begin{abstract}
We prove a rigidity result for the anisotropic Laplacian. More precisely, the domain of the problem is bounded by an unknown surface supporting a Dirichlet condition together with a Neumann-type condition which is not translation-invariant. Using a comparison argument, we show that the domain is in fact a Wulff shape. We also consider the more general case when the unknown surface is required to have its boundary on a given conical surface: in such a case, the domain of the problem is bounded by the unknown surface and by a portion of the given conical surface, which supports a homogeneous Neumann condition. We prove that the unknown surface lies on the boundary of a Wulff shape.
\end{abstract}

\section{Introduction}

In this manuscript we study an overdetermined boundary value problem for elliptic equations. In these kinds of problems, a well posed elliptic PDEs problem is overdetermined by adding a further condition on the solution at the boundary and, for this reason, a solution may exists only if the domain and the solution itself satisfy some suitable symmetry. For instance, the well-known Serrin's overdetermined problem deals with the torsion problem in a bounded domain $\Omega \subseteq\mathbb R^N$
\begin{equation} \label{pb_torsion}
\begin{cases}
\Delta u = -1 & \text{ in } \Omega \,, \\
u=0 & \text{ on } \partial \Omega\,,
\end{cases}
\end{equation}
with the overdetermining condition 
\begin{equation}\label{nabla_u_const}
|Du | = c \quad  \text{ on } \partial \Omega \,,
\end{equation}
for some positive constant $c$. Hence, Problem \eqref{pb_torsion}-\eqref{nabla_u_const} is not well-posed and a solution may exists only if the domain (and the solution itself) satisfies some special symmetry (radial symmetry in this case, see \cite{Serrin}).  There are many other results concerning overdetermined problems and, in particular, many generalizations of problem \eqref{pb_torsion}-\eqref{nabla_u_const} have been considered in recent years, such as for quasilinear operators, for domains in convex cones, and in a Finsler (or anisotropic) setting (see for instance \cite{Bianchini-Ciraolo,Cianchi-Salani,FarinaKawhol,FGK,Garofalo-Lewis,Wang-Xia} and references therein). 

The anisotropic setting that we are considering can be described in terms of a norm $H_0$ in~$\mathbb R^N$.
Let  $H$ be the dual norm of~$H_0$
(see Section~\ref{norms}), and consider the Finsler Laplacian~$\Delta_H$, whose definition is recalled in Section~\ref{Finsler}.
Under convenient assumptions, Cianchi and Salani \cite[Theorem~2.2]{Cianchi-Salani} generalized Serrin's result to this setting
and proved that the
translation-invariant overdetermined problem
\begin{equation}\label{CS_ODP}
\begin{cases}
-\Delta_{H \,} u = 1 &\mbox{in $\Omega$}
\\
u = 0 &\mbox{on $\partial \Omega$}
\\
H(Du(x)) = \mbox{const.} &\mbox{on $\partial \Omega$}
\end{cases}
\end{equation}
is solvable if and only if $\Omega = B_R(x_0,H_0)$ for some $x_0 \in
\mathbb R^N$ and $R > 0$ (see also \cite[Theorem~1.1]{Bianchini-Ciraolo} for the generalization to anisotropic $p$-Laplace equations).

If the overdetermining condition is not prescribed on the whole boundary, then the problem is called partially overdetermined. In this case, one can say less on the solution and a large variety of situations may occur. For instance, if we relax problem \eqref{pb_torsion}-\eqref{nabla_u_const} by prescribing the Dirichlet condition $u = 0$ on a proper subset $\Gamma_0 \subseteq\partial \Omega$ instead that on the whole boundary, then the existence of a solution does not imply that $\Omega$ is a ball: the simplest counterexample is given by the annulus, and more refined counterexamples are found in~\cite{FGLP}. Nevertheless, under convenient additional assumptions, a partially overdetermined problem turns out to be globally overdetermined and the conclusion can be recovered (see \cite{FV1,FV2,FG}).

In this paper we consider an anisotropic overdetermined problem in cones. Let\/ $\Omega \subseteq \mathbb R^N$, $N \ge 2$, be a bounded domain (i.e. an open, connected, nonempty subset) containing the origin~$O$, and let $\Sigma \subseteq \mathbb R^N$ be a cone
\begin{equation}\label{cone}
\Sigma=\lbrace\, tx \, : \, x\in\omega, \, t\in(0,+\infty) \,\rbrace
\end{equation}
for some domain $\omega \subseteq S^{N-1}$. We mention that the equality $\omega = \mathbb S^{N-1}$ (which implies $\Sigma = \mathbb R^N$) is allowed throughout the paper. In the case when $\omega \subsetneq \mathbb S^{N-1}$ we require that $\partial \Sigma \setminus \{O\}$ is a hypersurface of class~$C^1$ and therefore possesses an outward normal~$\nu$. Define
\begin{equation}\label{Gamma01_def}
\Gamma_0 = \Sigma \cap \partial \Omega \quad \text{ and } \quad \Gamma_1 = \partial\Omega \setminus \overline \Gamma_0 \,.
\end{equation}
Several problems in convex cones have been considered recently, like the isoperimetric and Sobolev inequalities in convex cones (see \cite{Cabre-RosOton-Serra, CirFigRon, Lions-Pacella, Lions-Pacella-Tricarico}) and overdetermined and Liouville type problems in \cite{CirFigRon,CiraoloRoncoroni, Pacella-Tralli}. 

Here we extend the approach in \cite{Greco_AMSA} to the more general anisotropic setting and by considering a (possibly) mixed boundary-value problem. The starting point lies in the observation (done in \cite[p.~28]{CiraoloRoncoroni}) that the solution of~\eqref{pb_torsion} in the Euclidean ball $\Omega = B_R(O,\,\allowbreak|\cdot|)$ obviously satisfies (being a radial function) $u_\nu = 0$ along $\Gamma_1 \setminus \{O\}$ for every smooth cone $\Sigma \subsetneq \mathbb R^N$. Our main result is the following.

\begin{theorem} \label{thm_main1}
Let\/ $\Omega$ and\/ $\Sigma \subseteq \mathbb R^N$ be as above, and let\/ $H$ be a norm of class $C^1(\mathbb R^N \setminus \{O\})$
such that the function $V(\xi) = \frac12 \, H^2(\xi)$ is strictly
convex. Let $q(r)$ be a positive, real-valued function such that the ratio
$q(r)/r$ is increasing in $r > 0$. If there exists a weak solution $u \in C^1\big((\Sigma \cap \Omega) \cup (\Gamma_1 \setminus \{O\}\big) \cap C^0\big(\,\overline{\Sigma \cap \Omega} \setminus \{O\}\big)$ of the problem
\begin{equation}\label{special_pb}
\begin{cases}
-\Delta_{H \,} u = 1 &\mbox{in $\Sigma \cap \Omega$}
\\
u = 0 &\mbox{on $\Gamma_0$}
\\
DV(Du(x)) \cdot \nu = 0 &\mbox{on $\Gamma_1 \setminus \{O\}$} \,,
\end{cases}
\end{equation}
satisfying the condition
\begin{equation} \label{overd_cond}
\lim_{x \to z} H(Du(x)) = q(H_0(z)) \quad \text{ for all $z \in \overline \Gamma_0$,}
\end{equation}
then\/ $\Sigma \cap \Omega=\Sigma \cap B_R(O,H_0)$ for some $R > 0$. 
\end{theorem}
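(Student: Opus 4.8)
The plan is a comparison argument against the torsion function of a Wulff shape, extending the strategy of~\cite{Greco_AMSA} to the anisotropic mixed setting. Set $V_0(x)=\tfrac12H_0^2(x)$. By the duality between $H$ and $H_0$ (Section~\ref{norms}) one has, for $x\neq O$, the identities $H(DH_0(x))=1$, $DV_0(x)=H_0(x)DH_0(x)$ and $DV(DV_0(x))=x$ (so $DV$ and $DV_0$ are mutually inverse), and $V$ is even, hence $DV$ is odd and positively $1$-homogeneous. Therefore, for every $\rho>0$,
\[
w_\rho(x)=\frac{1}{2N}\bigl(\rho^2-H_0^2(x)\bigr)\in C^1(\mathbb R^N)
\]
satisfies $Dw_\rho=-\tfrac1N DV_0$, whence $DV(Dw_\rho(x))=-\tfrac1N x$, and consequently: $-\Delta_{H\,}w_\rho=\operatorname{div}\bigl(\tfrac1N x\bigr)=1$ in $\mathbb R^N$; $w_\rho=0$ on $\partial B_\rho(O,H_0)$, with $w_\rho>0$ inside and $w_\rho<0$ outside; $H(Dw_\rho(x))=\tfrac1N H_0(x)$; and, since $\Sigma$ is a cone so that $x$ is tangent to $\partial\Sigma$, $DV(Dw_\rho)\cdot\nu=-\tfrac1N\,x\cdot\nu=0$ on $\partial\Sigma\setminus\{O\}$. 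Thus $w_\rho$ solves the same equation and the same homogeneous conormal condition as $u$.

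First I would record that $u>0$ in $\Sigma\cap\Omega$: the constant $0$ is a strict subsolution of $-\Delta_{H\,}(\cdot)=1$ with $0\le u$ on $\Gamma_0$ and vanishing conormal derivative on $\Gamma_1\setminus\{O\}$ (as $DV(0)=0$), so the weak comparison principle — valid because $\xi\mapsto DV(\xi)$ is strictly monotone, by strict convexity of $V$ — gives $u\ge0$, and the strong minimum principle upgrades this to $u>0$ (since $-\Delta_{H\,}u=1\ne0$). Next set
\[
R_-=\min_{\overline{\Gamma_0}}H_0,\qquad R_+=\max_{\overline{\Gamma_0}}H_0,
\]
attained and positive since $\overline{\Gamma_0}$ is compact and $O\notin\overline{\Gamma_0}$. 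On $\Gamma_0$ one has $w_{R_+}\ge0=u\ge w_{R_-}$; comparing $u$ with $w_{R_+}$ and with $w_{R_-}$ on the mixed problem (each competitor carrying the same homogeneous conormal condition on $\Gamma_1\setminus\{O\}$, the apex $O$ being a removable singularity for the weak formulation) yields
\[
w_{R_-}\le u\le w_{R_+}\qquad\text{in }\Sigma\cap\Omega.
\]

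Now pick $z_+\in\overline{\Gamma_0}$ with $H_0(z_+)=R_+$; then $\Gamma_0$ lies in the closed Wulff ball $\overline{B_{R_+}(O,H_0)}$ and touches its ($C^1$, convex) boundary internally at $z_+$, so $\partial\Omega$ and $\partial B_{R_+}(O,H_0)$ share there the same outward unit normal $n$. Since $u(z_+)=0=w_{R_+}(z_+)$, $w_{R_+}-u\ge0$ near $z_+$, and both $u$ and $w_{R_+}$ vanish on $C^1$ surfaces with gradient a negative multiple of $n$, comparing inner normal derivatives gives $H(Du(z_+))\le H(Dw_{R_+}(z_+))$; by~\eqref{overd_cond} and $H(Dw_{R_+}(z_+))=R_+/N$ this reads $q(R_+)\le R_+/N$. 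Symmetrically, at $z_-\in\overline{\Gamma_0}$ with $H_0(z_-)=R_-$, $\Gamma_0$ touches $\partial B_{R_-}(O,H_0)$ externally, $u-w_{R_-}\ge0$ near $z_-$, and the analogous comparison gives $q(R_-)\ge R_-/N$. Since $R_-\le R_+$ and $r\mapsto q(r)/r$ is increasing,
\[
\frac1N\le\frac{q(R_-)}{R_-}\le\frac{q(R_+)}{R_+}\le\frac1N,
\]
so the inner ratios equal $1/N$ and, by strict monotonicity, $R_-=R_+=:R$. Hence $w_{R_-}=w_{R_+}=w_R$ and the sandwich collapses to $u\equiv w_R$ in $\Sigma\cap\Omega$.

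Finally I would identify the domain. From $u=w_R$ and $u=0$ on $\Gamma_0$ we get $\Gamma_0\subseteq\{H_0=R\}=\partial B_R(O,H_0)$. The set $\Sigma\cap B_R(O,H_0)$ is star-shaped about $O$ (the Wulff shape is convex, $\Sigma$ is a cone), so for any $x$ in it the segment $(O,x]$ stays in $\Sigma$ and inside $\{H_0<R\}$, hence meets neither $\Gamma_0$ nor $\partial\Omega$, so $(O,x]\subseteq\Omega$ and $x\in\Sigma\cap\Omega$; thus $\Sigma\cap B_R(O,H_0)\subseteq\Sigma\cap\Omega$. Conversely, if some $x\in\Sigma\cap\Omega$ had $H_0(x)\ge R$, a path in the open connected set $\Sigma\cap\Omega$ from $x$ to a point near $O$ would meet $\{H_0=R\}$ at an interior point $x_0$ with $u(x_0)=w_R(x_0)=0$, contradicting $u>0$; hence $\Sigma\cap\Omega\subseteq\Sigma\cap B_R(O,H_0)$, and equality follows. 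The step I expect to be the main obstacle is making the two Hopf-type normal-derivative comparisons at $z_\pm$ rigorous when these extremizers lie on the edge $\overline{\Gamma_0}\setminus\Gamma_0\subseteq\partial\Sigma$, where $\partial\Omega$ carries no regularity: there one must either argue that the extremizers can be taken in the relative interior of $\Gamma_0$, or run a corner version of Hopf's lemma using that both $u$ and $w_R$ satisfy the homogeneous conormal condition on $\partial\Sigma$; handling the weak comparison principle near the cone apex $O$ is a further, milder technical point.
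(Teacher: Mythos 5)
Your overall skeleton coincides with the paper's: compare $u$ with the Wulff torsion functions $w_{R_-}$ and $w_{R_+}$ (obtained from the monotonicity of the weak solution with respect to set inclusion, which needs the strict convexity of $V$), extract the two inequalities $q(R_-)\ge R_-/N$ and $q(R_+)\le R_+/N$, and conclude $R_-=R_+$ from the strict monotonicity of $q(r)/r$. The estimate at the \emph{minimum} point is fine and is essentially the paper's Part~1: there one only needs a single direction (the radial one, which certainly enters $B_{R_-}\subseteq\Omega$), the mean value theorem at interior points $\tilde x$, the one-sided duality bound $z_-\cdot Du(\tilde x)\le H_0(z_-)\,H(Du(\tilde x))$, and the limit condition \eqref{overd_cond}; no differentiability of $u$ at $z_-$ and no normal to $\partial\Omega$ are required.

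The genuine gap is at the \emph{maximum} point. You write that $\partial\Omega$ and $\partial B_{R_+}(O,H_0)$ ``share the same outward unit normal $n$'' at $z_+$ and that ``comparing inner normal derivatives gives $H(Du(z_+))\le H(Dw_{R_+}(z_+))$.'' But the theorem imposes \emph{no} regularity on $\Gamma_0$, $u$ is not assumed differentiable at $z_+$ (only the limit of $H(Du(x))$ is prescribed), and — more structurally — a comparison of one-sided derivatives along a single admissible direction $e$ only yields $\partial_e u(z_+)\le \partial_e w_{R_+}(z_+)\le H(Dw_{R_+}(z_+))\,H_0(e)$, which bounds one directional derivative of $u$, not $H(Du)=\sup_e \partial_e u/H_0(e)$. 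To bound the supremum you would need the comparison in the direction $DH(Du)$ realizing it, and nothing guarantees that this direction points into the (possibly wildly irregular) set $\Sigma\cap\Omega$ near $z_+$. This is exactly why the duality inequality, which worked in your favor at $z_-$, goes the wrong way at $z_+$. The paper's Part~2 replaces the Hopf-type comparison with a contradiction argument along the steepest-ascent flow $x'=DH(Du(x))$: if $H(Du)>R_+/N+\epsilon_0$ near $z_+$, then $u$ grows at rate $H(Du)$ along the flow while $u_2$ grows at rate at most $H(Du_2)$, so $u_2-u$ decreases at rate at least $\epsilon_0$; positivity of $u_2-u$ bounds the flow time and hence the arc length by $M_0w_0/\epsilon_0$, yet the curve must travel distance $\epsilon_0$ to leave the neighborhood (it cannot end on $\overline\Gamma_0$, and the homogeneous conormal condition on $\Gamma_1$ is used precisely here to continue the flow tangentially along $\partial\Sigma$), giving $\epsilon_0^2\le M_0w_0\to0$ as $x_0\to z_+$. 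You flag the edge case $z_\pm\in\partial\Sigma$ as the main obstacle, but the obstruction above is present even when $z_+$ lies well inside $\Gamma_0$; some version of the flow (or another device replacing the missing boundary regularity) is indispensable. The remaining steps of your proposal (positivity of $u$, identification of the domain by star-shapedness) are correct but can be shortened as in the paper, since $R_-=R_+$ already forces $\Sigma\cap B_{R_-}\subseteq\Sigma\cap\Omega\subseteq\Sigma\cap B_{R_+}$ to collapse.
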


In the case when $\Sigma = \mathbb R^N$ we have $\Gamma_1 = \emptyset$ and the third condition in~\eqref{special_pb} is trivially satisfied. If, in addition to $\Sigma = \mathbb R^N$, we also have $H(\cdot) = |\cdot|$, then Theorem \ref{thm_main1} was proved by the second author in \cite{Greco_AMSA} under the weaker assumption that $q(r)/r$ is non-decreasing (see also \cite{Greco_PM,Greco-Mascia}). We mention that the rate of growth of $q$ is crucial to obtain the rigidity result. A counterexample for the Euclidean norm can be found in \cite[p.~488]{Greco_PM}. We also notice that, in the Euclidean case, the boundary condition on $\Gamma_1$ is simply $u_\nu = 0$.

We stress that problem \eqref{special_pb}-\eqref{overd_cond} can be seen as a partially overdetermined problem, since the overdetermining condition is given only on the part~$\overline \Gamma_0$ of the boundary. Accordingly, we are able to determine the shape of~$\Gamma_0$, while $\Gamma_1$ depends on the choice of the cone~$\Sigma$.

We emphasize that no regularity assumption is imposed on $\Gamma_0$. For this reason, we have to consider condition \eqref{overd_cond} instead of the simpler 
$$
H(Du(z)) = q(H_0(z)) \quad \textmd{ on } \Gamma_0
$$ 
(as, for instance, in \cite{Garofalo-Lewis}).

We also mention that Theorem \ref{thm_main1} could be extended to the case in which the ratio
$q(r)/r$ is non-decreasing in $r > 0$ by using Hopf's boundary point lemma (see \cite{Greco_PM}), as well as to more general anisotropic quasilinear operators (see for instance \cite{Roncoroni}). More precisely, one has to prove a 
Hopf's boundary point comparison principle between the solution and the solution in the Wulff shape. In this direction, the results in \cite{CastorinaRieySciunzi} can be a starting point for this investigation, and one can expect to prove a symmetry result in cones in the spirit of Theorem \ref{thm_main1} for a class of anisotropic equations of the form ${\rm div} (DV(Du)) + f(u)=0$. 

\medskip

The paper is organized as follows.  In Section \ref{norms} we recall some well-known facts about norms in $\mathbb R^N$. In Section \ref{Finsler} we recall the definition of Finsler Laplacian and prove some basic properties of \eqref{special_pb}. Sections \ref{norms} and \ref{Finsler} will be the occasion to give full details of some basic facts, and for this reason we give a detailed description which is readable also at a beginner level. In Section \ref{proof} we give the proof of Theorem \ref{thm_main1}. In Appendix \ref{example} we provide an example of a \emph{smooth} norm having \emph{non-smooth} dual norm (see also \cite[Example A. 1.19]{Cannarsa-Sinestrari}).

\vspace{2em}

\section{Norms, dual norms and Wulff shapes}\label{norms}

In this section we collect the definitions and properties needed in
the sequel. Further details are found in many recent papers: see, for
instance, \cite[Section~2.2]{Bianchini-Ciraolo},
\cite[Section~2.1]{Bellettini-Paolini}, and
\cite[Section~2.3]{Crasta-Malusa}. Standard references on convex
analysis are \cite{Rockafellar} and~\cite{Schneider} (see also
\cite[Section~5.3]{Simon}).

\subsection{Norms, convexity, and the Wulff shape}\label{convexity}

Let $H_0 \colon \mathbb R^N \to \mathbb R$ be a norm on~$\mathbb R^N$,
$N \ge 1$, i.e.\ let $H_0$ be a nonnegative function such that
\begin{align}
\label{zero}
&\mbox{$H_0(x) = 0$ if and only if $x = 0$;}
\\
\noalign{\medskip}
\label{homogeneity}
&\mbox{$H_0(tx) = |t| \, H_0(x)$ for all $t \in
  \mathbb R$ and $x \in \mathbb R^N$;}
\\
\noalign{\medskip}
\label{triangle}
&\mbox{$H_0(x_1 + x_2) \le H_0(x_1) + H_0(x_2)$ for all
  $x_1, x_2 \in \mathbb R^N$.}
\end{align}
The last inequality, known as \textit{the triangle inequality}, may be
equivalently replaced by the requirement that $H_0$ is \textit{a
  convex function}, as in~\cite{Bianchini-Ciraolo,BCS,Cianchi-Salani}.
Indeed, from (\ref{homogeneity})-(\ref{triangle}) it follows that every
norm satisfies $H_0(\lambda x_1 + (1 - \lambda) \, x_2) \le \lambda \,
H_0(x_1) + (1 - \lambda) \, H_0(x_2)$ for all $\lambda \in (0,1)$ and
$x_1,x_2 \in \mathbb R^N$, i.e.\ $H_0$~is a convex function.
Conversely, every nonnegative, convex function $H_0 \colon \mathbb
R^N \to \mathbb R$ satisfying (\ref{zero}) and~(\ref{homogeneity}) also
satisfies~(\ref{triangle}): indeed, we may write $H_0(x_1 + x_2) =
H_0((2x_1 + 2x_2)/2)$, and by convexity $H_0(x_1 + x_2) \le \frac12 \,
H_0(2x_1) + \frac12 \, H_0(2x_2)$. Now using~(\ref{homogeneity}) we
arrive at~(\ref{triangle}) and hence $H_0$ is a norm
(cf.~\cite[Theorem~5.3.8]{Simon}).

We denote by $B_R(x_0,H_0) = \{\, x \in \mathbb R^N : H_0(x - x_0) < R
\,\}$ the ball centered at~$x_0$ and with radius~$R > 0$ with respect to
the norm~$H_0$ (also called the \textit{Wulff shape}).

\subsection{Dual norms}

As usual, the dual norm $H(\xi)$ of the norm $H_0(x)$ is defined for
$\xi \in \mathbb R^N$ by
\begin{equation}\label{dual}
H(\xi) = \sup_{x \ne 0}
\frac{\, x \cdot \xi \,}{\, H_0(x) \,}
.
\end{equation}
It is well known that the supremum above is indeed a maximum, i.e., it
is attained with a particular $x \ne 0$. Furthermore, any given norm
$H_0$ turns out to be the dual norm of its dual norm~$H$: see, for
instance, \cite[Corollary~1.4]{Brezis}.

\subsection{Properties of the gradient of a norm}

Let us recall some essential properties of the gradient $\DH_0$ of a
(differentiable) norm~$H_0$.

\begin{lemma}\label{essential}
If $H_0$ is differentiable at some $x \in \mathbb R^N \setminus \{O\}$, then
\begin{enumerate}
\item The scalar product $x \cdot \DH_0(x)$ satisfies
\begin{equation}\label{radial_derivative}
x \cdot \DH_0(x) = H_0(x)
.
\end{equation}
\item $H_0$ is differentiable at $tx$ for every $t \in \mathbb R
  \setminus \{\, 0 \,\}$, and satisfies
\begin{equation}\label{differential}
\DH_0(tx) = (\mathop{\rm sgn} t) \, \DH_0(x)
.
\end{equation}
\item The gradient $\DH_0(x)$ is a unit vector with respect to the dual norm~$H$ in the sense that
\begin{equation}\label{unit}
H(\DH_0(x)) = 1
.
\end{equation}
\end{enumerate}
\end{lemma}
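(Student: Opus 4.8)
The plan is to exploit the positive $1$-homogeneity of $H_0$ (from \eqref{homogeneity}) together with the definition \eqref{dual} of the dual norm. For part (1), I would start from Euler's identity for positively homogeneous functions: since $H_0$ is differentiable at $x \ne O$ and satisfies $H_0(tx) = t\,H_0(x)$ for $t > 0$, differentiating both sides with respect to $t$ at $t = 1$ gives $x \cdot \DH_0(x) = H_0(x)$, which is exactly \eqref{radial_derivative}. One can avoid invoking Euler's theorem as a black box by writing $\frac{d}{dt}\big|_{t=1} H_0(tx) = \lim_{t\to 1}\frac{H_0(tx) - H_0(x)}{t-1} = \lim_{t\to 1}\frac{(t-1)H_0(x)}{t-1} = H_0(x)$ on one hand, and $\frac{d}{dt}\big|_{t=1}H_0(tx) = \DH_0(x)\cdot x$ on the other by the chain rule.

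For part (2), I would again use homogeneity: for $t > 0$ the function $x \mapsto H_0(tx)$ equals $t\,H_0(x)$, so differentiability at $x$ transfers to differentiability at $tx$, and the chain rule applied to $H_0(tx) = t\,H_0(x)$ yields $t\,\DH_0(tx) = t\,\DH_0(x)$, i.e. $\DH_0(tx) = \DH_0(x)$. For $t < 0$ I would use \eqref{homogeneity} in the form $H_0(tx) = |t|\,H_0(x) = (-t)\,H_0(x)$, so by the same reasoning $\DH_0(tx)\cdot(-1) \cdot |t| \cdot(\ldots)$—more carefully, writing $t = -s$ with $s > 0$, we have $H_0(-sx) = s\,H_0(x)$, and differentiating in $x$ gives $-s\,\DH_0(-sx)\cdot(-1)$; cleaner is to first establish $\DH_0(-x) = -\DH_0(x)$ directly (since $H_0$ is even by \eqref{homogeneity}, its gradient is odd) and then combine with the $t > 0$ case. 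Either way \eqref{differential} follows.

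For part (3), the plan is to read off both inequalities implicit in \eqref{dual}. By definition $H(\DH_0(x)) = \sup_{y \ne 0} \frac{y \cdot \DH_0(x)}{H_0(y)}$. Taking $y = x$ and using part (1) gives $H(\DH_0(x)) \ge \frac{x \cdot \DH_0(x)}{H_0(x)} = 1$. For the reverse inequality, I would use the gradient inequality for the convex function $H_0$: for every $y$, $H_0(y) \ge H_0(x) + \DH_0(x)\cdot(y - x) = \DH_0(x)\cdot y$, where the last equality again uses part (1); hence $\frac{y \cdot \DH_0(x)}{H_0(y)} \le 1$ for all $y \ne 0$, so $H(\DH_0(x)) \le 1$. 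Combining the two gives \eqref{unit}.

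The steps here are all short and essentially forced; the only point requiring a little care is the $t < 0$ case of part (2), where one must not lose a sign—handling it via the evenness of $H_0$ (and hence oddness of $\DH_0$) is the cleanest route and is the place I would be most careful. Everything else is a direct consequence of homogeneity, the chain rule, and the subgradient characterization of the dual norm.
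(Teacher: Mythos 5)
Your proposal is correct and follows essentially the same route as the paper: parts (1) and (2) are obtained by differentiating the homogeneity relation \eqref{homogeneity} in $t$ at $t=1$ and in the $x_i$ respectively, and part (3) is obtained by showing that $y = x$ realizes the supremum in the definition \eqref{dual} of the dual norm and then invoking \eqref{radial_derivative}. The only cosmetic difference is in part (3), where you justify the maximality of $y = x$ via the gradient inequality $H_0(y) \ge H_0(x) + \DH_0(x)\cdot(y-x)$ for the convex function $H_0$, whereas the paper states the same fact geometrically, namely that the hyperplane through $x$ orthogonal to $\DH_0(x)$ is a supporting hyperplane of the ball $B_{H_0(x)}(O,H_0)$; these are the same observation, and your analytic version is if anything slightly more self-contained.
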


\begin{proof}
Property~(\ref{radial_derivative}) is already found in the seminal
dissertation by Finsler (cf.~\cite[(28)]{Finsler}) as well as in
several recent papers: see, for instance,
\cite[(2.10)]{Bellettini-Paolini}, \cite[(1.8)]{Ferone-Kawohl} and
\cite[Proposition~1, (i)]{Wang-Xia}. Equality~(\ref{differential}),
instead, can be derived from~\cite[(2.9)]{Bellettini-Paolini}. Let us
give a proof, for completeness. Both (\ref{radial_derivative})
and~(\ref{differential}) are obtained by differentiating the
equality~(\ref{homogeneity}): more precisely,
(\ref{radial_derivative}) follows by
differentiating~(\ref{homogeneity}) in~$t$ at $t = 1$,
while~(\ref{differential}) is obtained by differentiation in~$x_i$ for
$i = 1, \ldots, N$.
Equality~\ref{unit} corresponds to~\cite[(3.12)]{Cianchi-Salani},
\cite[(1.7)]{Ferone-Kawohl} and \cite[Proposition~1,
(iii)]{Wang-Xia}). Let us prove the assertion and give a geometrical
interpretation.
\begin{figure}[h]
\centering
\begin{picture}(200,130)(55,8)
\includegraphics{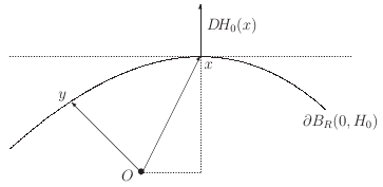}
\end{picture}
\caption{Maximizing the scalar product $y \cdot \DH_0(x)$ under $H_0(y) = R$}
\label{dual_norm}
\end{figure}

\noindent Define $R = H_0(x)$ and consider the ball
$B_R(O,H_0)$. By~(\ref{dual}), in order to compute the dual norm
$H(\DH_0(x))$ it suffices to find a point $y \in \partial B_R(O,H_0)$
that maximizes the ratio
$$
\frac{\, y \cdot \DH_0(x) \,}{\, H_0(y) \,}
=
\frac{\, y \cdot \DH_0(x) \,}{\, R \,}
.
$$%
The hyperplane passing through~$x$ and orthogonal to~$\DH_0(x)$ is a
supporting hyperplane for the convex set $B_R(O,H_0)$, and hence we
may take $y = x$ to maximize the numerator (see
Figure~\ref{dual_norm}). Consequently we have
$$
H(\DH_0(x))
=
\frac{\, x \cdot \DH_0(x) \,}{\, R \,}
.
$$%
Now using~(\ref{radial_derivative}) the conclusion follows.
\end{proof}

\subsection{Differentiability of a norm}

Let $H_0$ be a norm, and denote by $H$ its dual norm. Because
of~(\ref{homogeneity}), $H_0(x)$ is never differentiable at $x = 0$.
By~\cite[Corollary~1.7.3]{Schneider}, instead, differentiability
of~$H_0$ at $x \ne 0$ is related to the strict convexity of the unit
ball $B_1(0,H)$ of the dual norm~$H$: the next lemma collects several
equivalent conditions.

\begin{lemma}\label{differentiability}
The  following conditions are equivalent.
\begin{enumerate}
\def\labelenumi{\rm(\arabic{enumi})}
\def\theenumi{(\arabic{enumi})}
\item\label{1} $H_0(x)$ is differentiable at every $x \ne 0$.
\item\label{2} $H_0 \in C^1(\mathbb R^N \setminus \{O\})$.
\item\label{3} $B_1(0,H_0)$ is a domain of class $C^1$.
\item\label{4} $B_1(0,H)$ is strictly convex.
\item\label{5} The function $V(\xi) = \frac12 \, H^2(\xi)$ is strictly
  convex.
\end{enumerate}
\end{lemma}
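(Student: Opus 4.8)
The plan is to prove the five conditions equivalent by grouping them into a ``differentiability block'' (1)--(3) and a ``strict convexity block'' (4)--(5), establishing the equivalences internal to each block and then joining the blocks through the single duality statement \cite[Corollary~1.7.3]{Schneider}. Concretely I would show (1)$\Leftrightarrow$(2)$\Leftrightarrow$(3), then (3)$\Leftrightarrow$(4), and finally (4)$\Leftrightarrow$(5).

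\emph{The block (1)--(3).} The implication (2)$\Rightarrow$(1) is trivial, and (1)$\Rightarrow$(2) is the classical fact that a finite convex function which is differentiable at every point of an open set is automatically of class $C^1$ there (see, e.g., \cite[Theorem~25.5]{Rockafellar}). For (1)$\Rightarrow$(3): once we know $H_0\in C^1(\mathbb R^N\setminus\{O\})$, the level set $\{H_0=1\}=\partial B_1(0,H_0)$ is regular because $\DH_0(x)\ne0$ on it — indeed $x\cdot\DH_0(x)=H_0(x)=1$ by \eqref{radial_derivative} — so the implicit function theorem realizes $\partial B_1(0,H_0)$ locally as a $C^1$ graph, and since the convex set $B_1(0,H_0)$ lies locally on one side of its boundary it is a domain of class $C^1$. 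For the reverse (3)$\Rightarrow$(1): a convex body with $C^1$ boundary admits a unique supporting hyperplane at each boundary point, namely the tangent hyperplane; for a norm the subgradients of $H_0$ at a point $x\ne O$ are in bijective correspondence with the supporting hyperplanes of the homothetic body $\{H_0\le H_0(x)\}$ at $x$ — explicitly $\partial H_0(x)=\{\xi:H(\xi)\le1,\ \xi\cdot x=H_0(x)\}$, and distinct such $\xi$ yield distinct hyperplanes $\{\xi\cdot y=H_0(x)\}$ — so uniqueness of the supporting hyperplane forces $\partial H_0(x)$ to be a singleton, i.e.\ $H_0$ to be differentiable at $x$; homogeneity \eqref{homogeneity}, together with \eqref{differential}, then carries differentiability from the unit sphere to all of $\mathbb R^N\setminus\{O\}$.

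\emph{The bridge (3)$\Leftrightarrow$(4) and the block (4)--(5).} Since $H_0$ is in turn the dual norm of $H$ (as recalled above), formula \eqref{dual} shows that $\overline{B_1(0,H)}$ is exactly the polar body of $\overline{B_1(0,H_0)}$; the classical dictionary identifying smoothness (a $C^1$ boundary) of a convex body with strict convexity of its polar is precisely \cite[Corollary~1.7.3]{Schneider}, and applied here it gives (3)$\Leftrightarrow$(4). It remains to prove (4)$\Leftrightarrow$(5). Note first that $V=\tfrac12H^2$ is always convex, being the composition of the convex nonnegative function $H$ with the convex increasing map $t\mapsto\tfrac12 t^2$ on $[0,\infty)$. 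For (5)$\Rightarrow$(4): if $p\ne q$ lie in $\overline{B_1(0,H)}$ and $\lambda\in(0,1)$, strict convexity of $V$ gives $V(\lambda p+(1-\lambda)q)<\lambda V(p)+(1-\lambda)V(q)\le\tfrac12$, hence $H(\lambda p+(1-\lambda)q)<1$, so $B_1(0,H)$ is strictly convex. For (4)$\Rightarrow$(5): given $\xi_1\ne\xi_2$ in $\mathbb R^N$ and $\lambda\in(0,1)$, set $\xi_\lambda=\lambda\xi_1+(1-\lambda)\xi_2$; if $H(\xi_1)\ne H(\xi_2)$, then strict convexity of $t\mapsto\tfrac12 t^2$ combined with the convexity and monotonicity of $H$ yields $V(\xi_\lambda)\le\tfrac12\big(\lambda H(\xi_1)+(1-\lambda)H(\xi_2)\big)^2<\lambda V(\xi_1)+(1-\lambda)V(\xi_2)$, whereas if $H(\xi_1)=H(\xi_2)=t$ — necessarily $t>0$ since $\xi_1\ne\xi_2$ — then $\xi_1/t$ and $\xi_2/t$ are distinct points of $\partial B_1(0,H)$, so strict convexity of $B_1(0,H)$ places $\xi_\lambda/t$ in the open ball, i.e.\ $H(\xi_\lambda)<t$, whence again $V(\xi_\lambda)<\tfrac12 t^2=\lambda V(\xi_1)+(1-\lambda)V(\xi_2)$.

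\emph{Main obstacle.} The only non-elementary ingredients are the two cited convex-analysis facts — the $C^1$-regularity of the gradient of a convex function, and Schneider's smoothness/strict-convexity duality — while the step that needs the most care to write out is (3)$\Rightarrow$(1): the passage from uniqueness of the supporting hyperplane of the unit ball to uniqueness of the subgradient of $H_0$, and then the propagation of differentiability off the unit sphere by homogeneity. Everything else is routine convex analysis, so I would devote the bulk of the write-up to the subgradient/supporting-hyperplane correspondence and simply cite \cite{Rockafellar} and \cite{Schneider} for the two standard theorems.
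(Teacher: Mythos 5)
Your proof is correct, and it shares the paper's overall architecture --- the chain (1)$\Leftrightarrow$(2)$\Leftrightarrow$(3), a single appeal to \cite[Corollary~1.7.3]{Schneider} to bring in (4), and then (4)$\Leftrightarrow$(5) --- but two of the individual links are argued by genuinely different means. For (3)$\Rightarrow$(1) the paper avoids subdifferentials entirely: it represents $\partial B_1(0,H_0)$ in polar coordinates as a $C^1$ graph $\rho = \rho(\eta)$ over $\mathbb S^{N-1}$ and reads off the explicit formula $H_0(x) = |x|/\rho(|x|^{-1}x)$, which is visibly $C^1$ away from the origin; your route through the correspondence $\partial H_0(x) = \{\xi : H(\xi) \le 1,\ \xi\cdot x = H_0(x)\}$ and the uniqueness of supporting hyperplanes of a $C^1$ convex body is equally valid and is the more standard convex-analytic argument, at the price of having to set up the subgradient/hyperplane bijection that the paper's computation never needs (you rightly flag this as the step requiring the most care, and your propagation off the unit sphere via \eqref{homogeneity} and Lemma~\ref{essential} is the right way to finish). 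For (4)$\Leftrightarrow$(5) the paper argues by contraposition: $V$ fails to be strictly convex iff it is affine on some segment $\ell$, in which case $H = \sqrt{2V}$ is simultaneously concave and convex, hence constant, on $\ell$, and a homogeneity rescaling places a parallel segment on $\partial B_1(0,H)$; your direct two-case computation (splitting on whether $H(\xi_1) \ne H(\xi_2)$ or $H(\xi_1) = H(\xi_2) = t > 0$) proves the same equivalence constructively and is somewhat more self-contained, since it does not rely on the ``not strictly convex iff affine on a segment'' characterization. Both versions rest on exactly the same two external inputs (\cite[Corollary~25.5.1]{Rockafellar} for the automatic $C^1$-regularity and \cite[Corollary~1.7.3]{Schneider} for the smoothness/strict-convexity duality), so neither buys additional generality; the trade is between the paper's concrete coordinate computations and your more abstract but more routine convex analysis.
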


\begin{proof}
$\ref{2} \Rightarrow \ref{1}$ is obvious. The converse implication
$\ref{2} \Leftarrow \ref{1}$ follows by \cite[Corollary
25.5.1]{Rockafellar} because $H_0$ is a convex function. The
implication $\ref{2} \Rightarrow \ref{3}$ holds because $\DH_0(x) \ne
0$ when $x \ne 0$ by homogeneity~(\ref{homogeneity}), and hence the
level surface $\Gamma_1 = \partial B_1(0,H_0) = \{\, x \in \mathbb R^N
: H_0(x) = 1 \,\}$ is of class~$C^1$ by the implicit function
theorem. To prove the converse implication $\ref{2} \Leftarrow
\ref{3}$, observe that every $x \ne 0$ can be represented in polar
coordinates $\rho,\eta$ given by $\rho = | x |$ and $\eta =
| x |^{-1} \, x$. In such a coordinate system, by~\ref{3} and
by the convexity of~$B_1(0,H_0)$ the surface $\Gamma_1 = \partial
B_1(0,H_0)$ is the graph of a $C^1$-function $\rho = \rho(\eta)$ whose
domain is the Euclidean unit sphere $\mathbb S^{N - 1} = \{\, \eta \in
\mathbb R^N : | \eta | = 1 \,\}$. Since $x/H_0(x) \in
\Gamma_1$, we may write
$$
\frac{x}{\, H_0(x) \,}
=
\rho(\eta) \, \eta
=
\rho(| x |^{-1} \, x)
\,
\frac{x}{\, | x | \,}
$$%
and hence $H_0(x) = | x | / \rho(| x |^{-1} \, x)$.
Consequently, $H_0 \in C^1(\mathbb R^N \setminus \{O\})$ and
\ref{2} holds, hence $\ref{2} \Leftarrow \ref{3}$. Let us check that
$\ref{1}$ is equivalent to~$\ref{4}$. As a consequence
of~\cite[Corollary~1.7.3]{Schneider}, the unit ball $B_1(0,H)$ of the
dual norm~$H$ is strictly convex (its boundary does not contain any
segment) if and only if $H_0$ is differentiable at every $x \in
\mathbb R^N \setminus \{O\}$, namely $\ref{1} \Leftrightarrow
\ref{4}$, as claimed. The preceding arguments imply that the first
four conditions in the statement are equivalent to each other. To
complete the proof we now verify that $\ref{4}$ is equivalent
to~$\ref{5}$. Before proceeding further, observe that $2V(\xi)$ (hence
$V(\xi)$ as well) is convex because it is the square of the convex,
nonnegative function $H(\xi)$. Hence $V(\xi)$ is not strictly convex
if and only if there exists a line segment~$\ell \subseteq\mathbb R^N$
such that the restriction $V|_\ell$ is a linear function. But then
$H(\xi) = \sqrt{2V(x) \,}$ is concave along~$\ell$. Since $H(\xi)$ is
also convex, it follows that $H(\xi)$ is constant along~$\ell$. Hence
$V(\xi)$ fails to be strictly convex if and only if $H(\xi)$ takes a
constant value (say $c$) along some segment~$\ell$. Now recall that
$H(\xi)$ is homogeneous of degree~$1$: this has two relevant
consequences. The first consequence is that $H(\xi)$ is not constant
in the radial direction, hence the segment~$\ell$ is not aligned with
the origin. The second consequence is that we may find a new segment,
say $\ell'$, parallel to~$\ell$ and such that $H(\xi) = 1$ on~$\ell'$:
indeed, we may take
$$
\ell'
=
\{\,
\xi \in \mathbb R^N
:
c \, \xi \in \ell
\,\}
.
$$
In short, there is no loss of generality if we assume $c = 1$. But
then we may assert that $V(\xi)$ is not strictly convex if and only if
$\partial B_1(0,H)$ contains a line segment, i.e., if and only if
$B_1(0,H)$ is not strictly convex. This proves the equivalence between
$\ref{4}$ and~$\ref{5}$, and the lemma follows.
\end{proof}

It may well happen that $H_0 \in C^1(\mathbb R^N \setminus \{O\})$ and $H \not \in C^1(\mathbb R^N \setminus \{O\})$: see Section~\ref{example} or \cite[Example A. 1.19]{Cannarsa-Sinestrari}. If both  $H_0$ and~$H$ are smooth, then it is relevant for our purposes to notice that the gradient $\DH(\xi)$ evaluated at $\xi = \DH_0(x)$ is radial in the following sense (see also \cite[Lemma~2.2]{Bellettini-Paolini} and~\cite[c), p.~249]{Ferone-Kawohl}):

\begin{lemma}\label{radial}
If $H_0,H \in C^1(\mathbb R^N \setminus \{O\})$ then $x = H_0(x) \, \DH(\DH_0(x))$ for all $x \in \mathbb R^N \setminus \{O\}$. Furthermore, for every $\xi \ne O$, a point $x \ne O$ realizes the supremum in~\eqref{dual} if and only if
\begin{equation}\label{realizes}
x = H_0(x) \, \DH(\xi)
.
\end{equation}
\end{lemma}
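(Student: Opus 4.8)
The plan is to combine two ingredients. The first is \emph{biduality}: since $H_0$ is the dual norm of its own dual norm $H$, Lemma~\ref{essential} may be applied with the roles of $H_0$ and $H$ interchanged, so that for every $\xi \ne O$ one has $\xi \cdot \DH(\xi) = H(\xi)$ and $H_0(\DH(\xi)) = 1$. The second is the elementary observation that any vector $x$ realizing the supremum in~\eqref{dual} for a fixed $\xi$ produces an affine (indeed linear) function that touches the graph of $H$ from below at $\xi$, and that for a $C^1$ norm such a supporting function must coincide with the differential. The heart of the argument is therefore the following claim: \emph{if $x \ne O$ realizes the supremum in~\eqref{dual} for a given $\xi \ne O$, i.e.\ $H(\xi) = (x \cdot \xi)/H_0(x)$, then $\DH(\xi) = x/H_0(x)$.}

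To prove the claim I would set $g(\eta) = H(\eta) - (x \cdot \eta)/H_0(x)$ for $\eta \in \mathbb R^N$. By the very definition~\eqref{dual} we have $H(\eta) \ge (x \cdot \eta)/H_0(x)$ for all $\eta$, hence $g \ge 0$ on $\mathbb R^N$, while $g(\xi) = 0$ by the maximality hypothesis. Thus $\xi$ is a global minimum point of $g$; since $H \in C^1(\mathbb R^N \setminus \{O\})$, the function $g$ is differentiable at the interior point $\xi \ne O$, so $Dg(\xi) = 0$, that is $\DH(\xi) = x/H_0(x)$, which is~\eqref{realizes}. The first assertion of the lemma then follows by choosing $\xi := \DH_0(x)$: by Lemma~\ref{essential} we have $x \cdot \DH_0(x) = H_0(x)$ and $H(\DH_0(x)) = 1$, hence $(x \cdot \xi)/H_0(x) = 1 = H(\xi)$, so $x$ does realize the supremum in~\eqref{dual} for this $\xi$, and the claim gives $x = H_0(x)\,\DH(\DH_0(x))$. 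For the equivalence in the second assertion, one implication is exactly the claim; conversely, if $x = H_0(x)\,\DH(\xi)$ with $x,\xi \ne O$, then using the biduality identity $\xi \cdot \DH(\xi) = H(\xi)$ one computes $(x \cdot \xi)/H_0(x) = \DH(\xi) \cdot \xi = H(\xi)$, so the supremum in~\eqref{dual} is attained at $x$.

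There is no serious obstacle in this argument; the only two points demanding care are (a) the legitimacy of invoking Lemma~\ref{essential} for the dual norm $H$, which is justified precisely because $H_0$ is the dual norm of $H$, and (b) the passage from a supporting affine function to the differential, where the $C^1$ hypothesis on $H$ is genuinely used — without it one would only obtain that $x/H_0(x)$ belongs to the subdifferential $\partial H(\xi)$, and the pointwise formula~\eqref{realizes} could fail.
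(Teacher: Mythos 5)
Your proof is correct, but it follows a genuinely different route from the paper's. The paper maximizes the linear function $x \mapsto x \cdot \xi$ over the strictly convex $C^1$ surface $\partial B_1(0,H_0)$ and applies a Lagrange multiplier argument: the first-order condition there is $\xi = \lambda \, \DH_0(x)$, the sign of $\lambda$ and \eqref{unit} give $\lambda = H(\xi)$, so one first obtains the companion identity $\xi = H(\xi)\, \DH_0(x)$ and only then recovers \eqref{realizes} by swapping the roles of $x$ and $\xi$ via biduality, finishing with \eqref{differential}. You instead differentiate on the $\xi$-side: the function $\eta \mapsto H(\eta) - (x\cdot\eta)/H_0(x)$ is nonnegative by the very definition \eqref{dual} and vanishes at $\xi$ exactly when $x$ is a maximizer, so the first-order condition at this unconstrained global minimum yields $\DH(\xi) = x/H_0(x)$ directly. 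This is a legitimate and arguably cleaner argument: it bypasses the constrained optimization, the uniqueness-of-the-maximizer discussion (which the paper draws from strict convexity), and the role-reversal step, and it isolates precisely where $H \in C^1$ is used (without it one only gets $x/H_0(x) \in \partial H(\xi)$, as you note). What the paper's route buys in exchange is the dual identity $\xi = H(\xi)\,\DH_0(x)$ as an explicit byproduct. Your remaining steps — deducing the first assertion by taking $\xi = \DH_0(x)$ together with \eqref{radial_derivative} and \eqref{unit}, and the converse implication via Euler's identity $\xi \cdot \DH(\xi) = H(\xi)$ applied to $H$ (legitimate by biduality) — are all sound.
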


\begin{proof}
The unit ball $B_1(0,H_0)$ is a strictly convex domain of class~$C^1$, hence for every $\xi \in \partial B_1(0,H)$ there exists a unique $x \in \partial B_1(0,H_0)$ that maximizes the linear function $L(x) = x \cdot \xi$ under the constraint $H_0(x) = 1$. Taking the definition~\eqref{dual} of~$H(\xi)$ into account, we may say that for every $\xi \in \partial B_1(0,H)$ there exists a unique~$x \in \partial B_1(0,H_0)$ satisfying the equality
$$
1 = x \cdot \xi
.
$$
Furthermore there exists $\lambda \in \mathbb R$ (the Lagrange multiplier) such that $\xi = \lambda \, \DH_0(x)$. More precisely, since $x \cdot \DH_0(x) > 0$ by the convexity of~$B_1(0,H_0)$, we have~$\lambda > 0$. This and~\eqref{unit} imply $H(\xi) = \lambda$, and therefore $\xi = H(\xi) \, \DH_0(x)$. By reverting the roles of $x$ and~$\xi$ we get~\eqref{realizes} and the lemma follows using~\eqref{differential}.
\end{proof}

We conclude this section with the following proposition:

\begin{proposition}[Regularity of the Lagrangian]\label{smoothness}
Let $H$ be a norm of class $C^1(\mathbb R^N \setminus \{O\})$.
Then the Lagrangian $V(\xi) = \frac12 \, H^2(\xi)$ belongs to the class
$C^1(\mathbb R^N)$
\end{proposition}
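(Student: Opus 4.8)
The plan is to split $\mathbb R^N$ into the punctured space $\mathbb R^N \setminus \{O\}$, where the $C^1$ regularity of $H$ is available by hypothesis, and the single point $\xi = O$, where $H$ itself is never differentiable but the extra power in $V = \tfrac12 H^2$ repairs this defect. So the content of the proposition is entirely about the behaviour of $V$ and of its gradient at the origin.

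On $\mathbb R^N \setminus \{O\}$ the function $V$ is of class $C^1$ as a product of $C^1$ functions, and the chain rule gives $DV(\xi) = H(\xi)\,\DH(\xi)$ for $\xi \ne O$. Next I would check that $V$ is differentiable at $O$ with $DV(O) = O$: since $H$ is continuous and positively homogeneous of degree one, it is bounded on the Euclidean unit sphere, so there is $M > 0$ with $H(\xi) \le M\,|\xi|$ for every $\xi$; hence
\begin{equation*}
0 \le V(\xi) = \tfrac12 H^2(\xi) \le \tfrac{M^2}{2}\,|\xi|^2 = o(|\xi|) \qquad \text{as } \xi \to O,
\end{equation*}
which is precisely the statement that $V$ is differentiable at $O$ with zero gradient.

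It remains to prove that $DV$ is continuous at $O$; together with its continuity on the open set $\mathbb R^N \setminus \{O\}$ this yields $DV \in C^0(\mathbb R^N)$, i.e. $V \in C^1(\mathbb R^N)$. The crucial observation is that $\DH$ stays \emph{bounded} near the origin: applying~\eqref{differential} with the roles of $H_0$ and~$H$ interchanged (legitimate, since $H_0$ is the dual norm of~$H$) shows that $\DH$ is positively homogeneous of degree zero, so it coincides with its restriction to the compact sphere $\mathbb S^{N-1}$, on which it is continuous; hence $|\DH(\xi)| \le C$ for every $\xi \ne O$. (Equivalently, one may read~\eqref{unit} for~$H$, namely $H_0(\DH(\xi)) = 1$, and use the equivalence of the norm~$H_0$ with the Euclidean norm.) Combining this with $H(\xi) \le M\,|\xi|$ gives $|DV(\xi)| = H(\xi)\,|\DH(\xi)| \le CM\,|\xi|$, so $DV(\xi) \to O = DV(O)$ as $\xi \to O$, and the proposition follows.

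The argument is elementary; the only delicate point — and the natural place for a careless attempt to go wrong — is the behaviour of $\DH$ at the origin. One should not try to extend $\DH$ continuously there, since in general it genuinely fails to have a limit at~$O$; what is needed, and what degree-zero homogeneity supplies for free, is merely that $\DH$ remains bounded, so that the vanishing factor $H(\xi) \to 0$ forces $DV(\xi) \to O$.
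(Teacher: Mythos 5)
Your proposal is correct and follows essentially the same route as the paper: the chain rule off the origin giving $DV(\xi)=H(\xi)\,\DH(\xi)$, the equivalence of norms to bound $H(\xi)\le M\,|\xi|$, and the degree-zero homogeneity of $\DH$ (Lemma~\ref{essential} applied to the norm~$H$) to bound $\DH$ on the punctured space, whence $DV(\xi)\to 0$. The only cosmetic difference is that you establish differentiability at the origin directly from the quadratic bound $V(\xi)=O(|\xi|^2)=o(|\xi|)$, whereas the paper deduces it from the continuity of $V$ at $0$ together with the continuous extension of the gradient; both are valid and rest on the same estimates.
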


\begin{proof}
Since all norms on~$\mathbb R^N$ are equivalent, there exist two
positive constants $\sigma,\gamma$ such that
\begin{equation}\label{equivalent}
\sigma \, | \xi |
\le
H(\xi)
\le
\gamma \, | \xi |
\mbox{ for all $\xi \in \mathbb R^N$}
\end{equation}
(the notation is taken from~\cite[(3.4)]{Cianchi-Salani}). In
particular, $H(\xi)$ is continuous at $\xi = 0$. Concerning the
differentiability, for $\xi \ne 0$ we may apply the standard rules of
calculus and get
\begin{equation}\label{DV}
DV(\xi) = H(\xi) \allowbreak \, \DH(\xi)
.
\end{equation}
Furthermore, the right-hand side admits a continuous extension to $\xi
= 0$ because $\DH(\xi)$ is bounded on the compact surface $\mathbb
S^{N - 1} = \{\, \xi : | \xi | = 1 \,\}$, and
by~(\ref{differential}) $\DH(\xi)$ is also bounded in the whole
punctured space $\mathbb R^N \setminus \{O\}$. Hence
$DV(\xi) \to 0$ as $\xi \to 0$. This and the continuity of $V(\xi)$ at $\xi
= 0$ imply that $V$ is also differentiable at $\xi = 0$, and $DV(0) =
0$. The proposition follows.
\end{proof}

\section{The Finsler Laplacian}\label{Finsler}

Given a norm $H$ of class $C^1(\mathbb R^N \setminus \{O\})$,
the function $V(\xi) = \frac12 \, H^2(\xi)$ belongs to the class
$C^1(\mathbb R^N)$ by Proposition~\ref{smoothness}. The
\textit{Finsler Laplacian} associated to~$H$ is the differential
operator $\Delta_H$ which is formally defined by
$$
\Delta_{H \,} u(x) = {\rm div}
\Big( DV(Du(x)) \Big)
.
$$%

 \subparagraph{Notation.}

In the present paper it is understood that the gradient operator~$D$
takes precedence over the composition of functions: thus, the notation
$DV(Du(x))$ represents the vector field $DV(\xi)$ evaluated at the
point $\xi = Du(x)$. Such a vector field differs, in general, from the
field whose components are the derivatives of $V(Du(x))$ with respect
to~$x_i$, $i = 1, \dots, N$. Clearly, if $H(\xi)$ is
the Euclidean norm $| \xi |$ then $V(\xi) = \frac12 \, |
\xi |^2$ and therefore $DV(\xi) = \xi$. Thus, the operator
$\Delta_H$ reduces to the standard Laplacian~$\Delta$.

Let $\Omega$ be a bounded domain in $\mathbb R^N$, $N \ge 2$, containing the origin $O$. Let $\Sigma$ and $\Gamma_0,\Gamma_1$ be as in~\eqref{cone} and~\eqref{Gamma01_def}, respectively. We define the function space
$$
W_{\Gamma_0}^{1,2} (\Omega \cap \Sigma) = \{ v\colon \Omega \cap \Sigma \to \mathbb R \ \textmd{ s.t. } \  v = w \, \chi_{\Omega \cap \Sigma} \textmd{ for some } w \in W^{1,2}_0(\Omega) \} \,,
$$
where $\chi_{\Omega \cap \Sigma}$ denotes the characteristic function of $\Omega \cap \Sigma$. Notice that a function $v$ in $W_{\Gamma_0}^{1,2} (\Omega \cap \Sigma)$ has zero trace on $\Gamma_0$.

\begin{definition}[Weak solution]\label{definition}
Let $\Omega$ be as above and let $f$ be a function
in $L^2(\Omega \cap \Sigma)$. A \textit{weak solution} of
\begin{equation}\label{Dirichlet}
\begin{cases}
-\Delta_{H \,} u = f &\mbox{in $\Omega \cap \Sigma$;}
\\
u = 0 &\mbox{on $\Gamma_0$}
\\
\noalign{\vspace{1pt}}
DV(Du) \cdot \nu = 0 &\mbox{on $\Gamma_1 \setminus \{O\}$}
\end{cases}
\end{equation}
is a function $u \in W_{\Gamma_0}^{1,2}(\Omega \cap \Sigma)$ such that 
\begin{equation}\label{weak}
\int_{\Omega \cap \Sigma}
Dv(x) \cdot DV(Du(x)) \, dx
=
\int_{\Omega \cap \Sigma}
f(x) \, v(x) \, dx
\end{equation}
for every $v \in W_{\Gamma_0}^{1,2}(\Omega\cap \Sigma)$.
\end{definition}

\begin{theorem}[Existence]\label{existence}
Let\/ $\Omega$, $\Sigma$ and $f$ be as above. If\/ $H$ is a norm of class $C^1(\mathbb R^N \setminus \{O\})$, then Problem\/~{\rm(\ref{Dirichlet})} has a weak solution.
\end{theorem}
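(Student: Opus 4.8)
The plan is to produce the weak solution by the direct method of the calculus of variations, after observing that the weak formulation~\eqref{weak} is precisely the Euler--Lagrange equation of the convex functional
\[
J(v) = \int_{\Omega \cap \Sigma} \bigl( V(Dv(x)) - f(x)\,v(x) \bigr)\,dx,
\qquad v \in X := W^{1,2}_{\Gamma_0}(\Omega \cap \Sigma).
\]
First I would record the properties of $J$ and $X$ that make the method run. By Proposition~\ref{smoothness} the Lagrangian $V$ belongs to $C^1(\mathbb R^N)$, and the equivalence of norms~\eqref{equivalent} yields the two-sided bound $\tfrac{\sigma^2}{2}|\xi|^2 \le V(\xi) \le \tfrac{\gamma^2}{2}|\xi|^2$; hence $J$ is finite on $X$ (the quadratic term is integrable and the linear term is controlled by the Cauchy--Schwarz inequality, since $f \in L^2(\Omega\cap\Sigma)$), and, being $C^1$ with at most quadratic growth, $J$ is G\^ateaux differentiable on $X$. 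Moreover $X$ is a Hilbert space: by definition every $v \in X$ is the restriction to $\Omega\cap\Sigma$ of some $w \in W^{1,2}_0(\Omega)$, so $X$ is isometric to the quotient of $W^{1,2}_0(\Omega)$ by the closed subspace of functions vanishing a.e.\ on $\Omega\cap\Sigma$; equivalently one may simply minimise $w \mapsto \int_{\Omega\cap\Sigma}(V(Dw) - fw)\,dx$ over $w \in W^{1,2}_0(\Omega)$, a functional depending on $w$ only through its restriction to $\Omega\cap\Sigma$.

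Next I would assemble the three ingredients of the direct method. For coercivity I would combine the lower bound $V(\xi)\ge\tfrac{\sigma^2}{2}|\xi|^2$ with a Poincar\'e inequality $\|v\|_{L^2(\Omega\cap\Sigma)}\le C_P\|Dv\|_{L^2(\Omega\cap\Sigma)}$ for $v\in X$ --- available since functions of $X$ vanish, in the trace sense, on $\Gamma_0$ --- obtaining $J(v)\ge\tfrac{\sigma^2}{2}\|Dv\|_{L^2}^2-C_P\|f\|_{L^2}\|Dv\|_{L^2}\to+\infty$, so that the sublevel sets of $J$ are bounded in $X$. For weak lower semicontinuity I would use that $V$ is convex, being the square of the nonnegative convex function $H$ (cf.\ the proof of Lemma~\ref{differentiability}): then $v\mapsto\int_{\Omega\cap\Sigma}V(Dv)$ is convex and strongly continuous, hence weakly lower semicontinuous on $X$, while the linear term $v\mapsto\int_{\Omega\cap\Sigma}fv$ is weakly continuous because a bounded sequence in $X$ lifts to a bounded sequence in $W^{1,2}_0(\Omega)$, which by the Rellich--Kondrachov theorem is precompact in $L^2(\Omega)$ and a fortiori in $L^2(\Omega\cap\Sigma)$. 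Reflexivity of $X$ together with coercivity and weak lower semicontinuity then yields a minimiser $u\in X$ of $J$, and the minimality condition $\frac{d}{dt}\big|_{t=0}J(u+tv)=0$ for every $v\in X$ is precisely the weak identity~\eqref{weak}; thus $u$ is a weak solution of~\eqref{Dirichlet}.

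The step I expect to be the main obstacle is the coercivity --- i.e.\ the Poincar\'e inequality on $X$ --- because no regularity at all is assumed on $\partial\Omega$, hence none on $\Gamma_0$, so a standard trace or Poincar\'e theorem on $\Omega\cap\Sigma$ is not directly available. The cleanest way around this is to keep $W^{1,2}_0(\Omega)$ as the ambient space and exploit that the functional only sees the values of $w$ on $\Omega\cap\Sigma$: since $\Omega\cap\Sigma$ and $\Omega$ differ only by the portion of the conical wall contained in $\Omega$, which has Lebesgue measure zero, one has $\int_{\Omega\cap\Sigma}|Dw|^2=\int_\Omega|Dw|^2$ and the required estimate reduces to the elementary Friedrichs inequality $\|w\|_{L^2(\Omega)}\le(\operatorname{diam}\Omega)\|Dw\|_{L^2(\Omega)}$, valid for every bounded domain and every $w\in W^{1,2}_0(\Omega)$. (If one prefers to work intrinsically on $\Omega\cap\Sigma$, the inequality can instead be obtained by a compactness--contradiction argument exploiting that $\Gamma_0$ has positive $\mathcal H^{N-1}$-measure.) Everything else --- weak lower semicontinuity, reflexivity, and the differentiation of $J$ --- is routine once the $C^1$-regularity and quadratic bounds for $V$ from Proposition~\ref{smoothness} and~\eqref{equivalent} are in hand.
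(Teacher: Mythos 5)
Your proposal is correct and follows essentially the same route as the paper: the paper also obtains the weak solution by minimising the functional $F[u]=\int_{\Omega\cap\Sigma}\bigl(V(Du)-fu\bigr)\,dx$ over $W^{1,2}_{\Gamma_0}(\Omega\cap\Sigma)$, invoking the norm equivalence~\eqref{equivalent} and the Poincar\'e inequality for coercivity and Proposition~\ref{smoothness} for the differentiability of the functional. Your additional discussion of weak lower semicontinuity and of how to justify the Poincar\'e inequality despite the lack of boundary regularity simply fills in details the paper leaves implicit.
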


\begin{proof}
Define $V(\xi) = \frac12 \, H^2(\xi)$. By~(\ref{equivalent}), and by the Poincar\'e inequality in~$W^{1,2}_{\Gamma_0}(\Omega \cap \Sigma)$ (see \cite[Theorem~7.91]{Salsa}), the functional
\begin{equation}\label{F}
F[u]
=
\int_{\Omega \cap \Sigma}
\Big(
V(Du(x))
-
f(x) \, u(x)
\Big)
\,
dx
\end{equation}
is well defined and coercive over the Sobolev space $W^{1,2}_{\Gamma_0}(\Omega \cap \Sigma)$, hence there exists a minimizer. Since the
functional~$F$ is differentiable (as a consequence of
Proposition~\ref{smoothness}), each minimizer is a weak solution of
the Euler equation $-\Delta_{H \,} u = f(x)$.
\end{proof}

\begin{remark}
 If, in addition to the assumption of Theorem~\ref{existence},
  the function $V(\xi)$ is strictly convex, then the
  functional~$F$ in~(\ref{F}) is also strictly convex, and the
  minimizer is unique. Uniqueness of the weak solution to
  Problem~(\ref{Dirichlet}) also follows by letting $\Omega^1 = \Omega^2 = \Omega$ in Lemma~\ref{monotonicity}. Several
  conditions equivalent to the strict convexity of $V(\xi)$ are given in Lemma~\ref{differentiability}.
\end{remark}

In view of our subsequent application we now prepare the following
comparison principle, which asserts that if $f \ge 0$ then the
solution of~(\ref{Dirichlet}) is not only unique but also nonnegative and
monotonically increasing with respect to set inclusion.

\begin{lemma}[Nonnegativity] \label{nonnegativity}
Let\/ $H$ be a norm of class $C^1(\mathbb R^N \setminus \{O\})$, and let\/ $\Omega$ and\/~$\Sigma$ be as above. If $f \in L^2(\Omega \cap \Sigma)$ is nonnegative, then any weak solution $u$ to \eqref{Dirichlet} is also nonnegative.
\end{lemma}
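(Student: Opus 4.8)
The plan is to test the weak formulation~\eqref{weak} against a suitable function built from the negative part of~$u$. Set $v = u^- = \max\{-u,0\}$, extended by zero outside $\Omega \cap \Sigma$. Since $u \in W^{1,2}_{\Gamma_0}(\Omega \cap \Sigma)$ — i.e.\ $u = w\,\chi_{\Omega \cap \Sigma}$ for some $w \in W^{1,2}_0(\Omega)$ — the negative part $w^-$ lies in $W^{1,2}_0(\Omega)$ as well, and $u^- = w^- \chi_{\Omega \cap \Sigma}$ belongs to $W^{1,2}_{\Gamma_0}(\Omega \cap \Sigma)$, so it is an admissible test function. With this choice the right-hand side of~\eqref{weak} is $\int f\,u^- \ge 0$, because $f \ge 0$ and $u^- \ge 0$.

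Next I would examine the left-hand side. On the set $\{u \ge 0\}$ we have $u^- \equiv 0$, hence $Du^- = 0$ there; on $\{u < 0\}$ we have $u^- = -u$, hence $Du^- = -Du$. Therefore
\[
\int_{\Omega \cap \Sigma} Du^-(x) \cdot DV(Du(x))\,dx
= -\int_{\{u < 0\}} Du(x) \cdot DV(Du(x))\,dx .
\]
Now invoke the homogeneity identity~\eqref{radial_derivative} applied to the norm~$H$ (whose gradient is $DH$): since $DV(\xi) = H(\xi)\,DH(\xi)$ by~\eqref{DV}, we get $\xi \cdot DV(\xi) = H(\xi)\,\big(\xi \cdot DH(\xi)\big) = H^2(\xi) = 2V(\xi) \ge 0$ for every $\xi$, with $\xi \cdot DV(\xi) = 0$ only when $\xi = 0$ (recall $DV(0) = 0$ by Proposition~\ref{smoothness}). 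Hence $Du(x) \cdot DV(Du(x)) = H^2(Du(x)) \ge 0$ a.e., so the displayed integral is $\le 0$.

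Combining the two estimates gives $0 \le \int f\,u^- = \int_{\Omega \cap \Sigma} Du^- \cdot DV(Du) = -\int_{\{u<0\}} H^2(Du) \le 0$, so all quantities vanish; in particular $\int_{\{u<0\}} H^2(Du)\,dx = 0$, which forces $Du = 0$ a.e.\ on $\{u < 0\}$, i.e.\ $Du^- = 0$ a.e.\ on $\Omega \cap \Sigma$. Since $u^- \in W^{1,2}_{\Gamma_0}(\Omega \cap \Sigma)$ has zero trace on $\Gamma_0$ and $\Omega$ is connected, the Poincar\'e inequality (as in the proof of Theorem~\ref{existence}) yields $u^- \equiv 0$, that is $u \ge 0$. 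The only point requiring a little care — the main (mild) obstacle — is the verification that $u^-$ is a legitimate element of $W^{1,2}_{\Gamma_0}(\Omega \cap \Sigma)$, which reduces to the standard fact that the positive/negative part of a $W^{1,2}_0$ function stays in $W^{1,2}_0$ together with the prescribed product-with-$\chi_{\Omega\cap\Sigma}$ structure of the space; everything else is the Dirichlet-principle computation above.
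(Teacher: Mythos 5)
Your proof is correct and follows essentially the same route as the paper's: the paper tests with $v=-u^-=\min\{u,0\}$ rather than $u^-$, but the key identity $\xi\cdot DV(\xi)=2V(\xi)=H^2(\xi)$ and the concluding Poincar\'e step are identical. The care you take in verifying that $u^-$ is admissible in $W^{1,2}_{\Gamma_0}(\Omega\cap\Sigma)$ is a point the paper leaves implicit.
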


\begin{proof}
By~\eqref{DV} and~\eqref{radial_derivative} we find $\xi \cdot DV(\xi) = 2V(\xi)$ for $\xi \ne 0$. The equality continues to hold at $\xi = 0$ by Proposition~\ref{smoothness}. Hence, using $v(x) = -u^-(x) = \min\{u(x),0\}$ as a test-function in \eqref{weak} we get
$$
0
\le
2\int_{\Omega \cap \Sigma} V(Dv(x)) \, dx
=
2\int_{\{\,u<0\,\}} V(Du(x)) \, dx
=
\int_{\{\,u<0\,\}} f(x) \, u(x) \, dx
\le
0\,,
$$ 
which implies $Dv(x) = 0$ almost everywhere in $\Omega \cap \Sigma$. Since $v \in W^{1,2}_{\Gamma_0}(\Omega \cap \Sigma)$, by the Poincar\'e inequality \cite[Theorem 7.91]{Salsa} it follows that $v = 0$, hence $u \ge 0$ a.e.\ in~$\Omega \cap \Sigma$.
\end{proof}

\begin{lemma}[Monotonicity]\label{monotonicity} Let\/ $H$ be a norm of class $C^1(\mathbb R^N \setminus \{O\})$ such that the function $V(\xi) = \frac12 \, H^2(\xi)$ is strictly convex. Let\/~$\Sigma$ be as in~\eqref{cone}, and let\/ $\Omega^i$, $i = 1,2$, be two bounded domains in\/~$\mathbb R^N$, $N \ge 2$, containing the origin and satisfying\/ $\Omega^1 \cap \Sigma \subseteq \Omega^2 \cap \Sigma$. Choose a nonnegative $f \in L^2(\Omega^2 \cap \Sigma)$, and denote by $u_i$ any weak solution of Problem\/~{\rm(\ref{Dirichlet})} with\/ $\Omega = \Omega^i$. Then $u_1 \le u_2$ almost everywhere in\/~$\Omega^1$.
\end{lemma}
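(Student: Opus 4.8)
The plan is to test the two weak formulations against a cleverly chosen function and exploit the strict monotonicity of the map $\xi \mapsto DV(\xi)$ that follows from the strict convexity of~$V$. Set $w = u_1 - u_2$ and consider the test-function $v = (u_1 - u_2)^+ = \max\{u_1 - u_2, 0\}$, extended by zero outside $\Omega^1 \cap \Sigma$. The first thing I would check is that $v$ is an admissible test-function in \emph{both} weak formulations, i.e.\ $v \in W^{1,2}_{\Gamma_0}(\Omega^i \cap \Sigma)$ for $i=1,2$. This is where the inclusion $\Omega^1 \cap \Sigma \subseteq \Omega^2 \cap \Sigma$ and the structure of $W^{1,2}_{\Gamma_0}$ (traces of $W^{1,2}_0(\Omega^i)$ functions restricted to $\Omega^i \cap \Sigma$) enter: $v$ vanishes on $\Gamma_0^2$ because $u_2 \geq u_1 \geq 0$ forces $v \equiv 0$ near $\Gamma_0^2 \setminus \overline{\Gamma_0^1}$, using Lemma~\ref{nonnegativity} to know $u_1 = 0$ on $\Gamma_0^1$ and $u_1 \le 0$ outside $\Omega^1$.

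Next I would subtract the two weak identities \eqref{weak}, one for $u_1$ on $\Omega^1 \cap \Sigma$ and one for $u_2$ on $\Omega^2 \cap \Sigma$, both tested against $v$. Since $f$ is the same in both problems and $v$ is supported in $\Omega^1 \cap \Sigma$, the right-hand sides cancel and we obtain
\begin{equation}\label{eq:diff}
\int_{\Omega^1 \cap \Sigma} Dv(x) \cdot \big( DV(Du_1(x)) - DV(Du_2(x)) \big) \, dx = 0 \,.
\end{equation}
On the set $\{u_1 > u_2\}$ we have $Dv = D(u_1 - u_2) = Du_1 - Du_2$ almost everywhere, while $Dv = 0$ a.e.\ on $\{u_1 \le u_2\}$. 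Hence the integrand in \eqref{eq:diff} equals $\big(Du_1 - Du_2\big) \cdot \big( DV(Du_1) - DV(Du_2) \big)$ on $\{u_1 > u_2\}$ and vanishes elsewhere. By the strict convexity of~$V$ (equivalently, strict monotonicity of $DV$, which holds on all of $\mathbb R^N$ since $V \in C^1(\mathbb R^N)$ by Proposition~\ref{smoothness}), the quantity $(\eta - \zeta) \cdot (DV(\eta) - DV(\zeta))$ is strictly positive whenever $\eta \neq \zeta$ and nonnegative always. Therefore the integrand is nonnegative, and \eqref{eq:diff} forces it to be zero a.e.; strict monotonicity then yields $Du_1 = Du_2$ a.e.\ on $\{u_1 > u_2\}$, i.e.\ $Dv = 0$ a.e.\ in $\Omega^1 \cap \Sigma$. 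Since $v \in W^{1,2}_{\Gamma_0}(\Omega^1 \cap \Sigma)$, the Poincar\'e inequality \cite[Theorem~7.91]{Salsa} gives $v = 0$, so $u_1 \le u_2$ a.e.\ in $\Omega^1 \cap \Sigma$; outside $\Sigma$ both extensions agree with the zero extension of $W^{1,2}_0$ functions and $u_1 \le 0 \le u_2$ is already known, so $u_1 \le u_2$ a.e.\ in $\Omega^1$.

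The main obstacle I anticipate is the bookkeeping in the first step: verifying that $v=(u_1-u_2)^+$ genuinely lies in $W^{1,2}_{\Gamma_0}$ of \emph{both} domains, since the definition of this space is through restrictions of $W^{1,2}_0(\Omega^i)$ functions and the two domains differ. One needs to argue carefully that $v$, after extension by zero, belongs to $W^{1,2}_0(\Omega^2)$ — this uses $\Omega^1 \subseteq \Omega^2$ only \emph{intersected with}~$\Sigma$, so one should really work with the extension of $u_1$ by zero to all of $\Omega^1$ (which is in $W^{1,2}_0(\Omega^1)$ by definition of $W^{1,2}_{\Gamma_0}$), and then use that $u_1 \le 0$ outside $\Sigma$ (again by Lemma~\ref{nonnegativity} applied suitably, or directly from the zero extension) to conclude $(u_1 - u_2)^+$ is supported in $\overline{\Omega^1 \cap \Sigma}$ and has the right trace. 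Once admissibility is settled, the remainder is the standard strict-monotonicity argument and presents no difficulty.
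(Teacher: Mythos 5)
Your proposal is correct and follows essentially the same route as the paper's proof: test both weak formulations with $v=(u_1-u_2)^+$ (extended by zero, admissible for both domains thanks to Lemma~\ref{nonnegativity}), subtract to get $\int_{\{v>0\}}(Du_1-Du_2)\cdot\bigl(DV(Du_1)-DV(Du_2)\bigr)\,dx=0$, and invoke the strict convexity of~$V$. The paper is terser on the final step (it concludes directly that $|\{v>0\}|=0$), whereas you spell out $Dv=0$ a.e.\ plus the Poincar\'e inequality, which is a harmless elaboration of the same argument.
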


\begin{proof}
Let $\Gamma_{\! 0}^i = \Sigma \cap \partial \Omega^i$, $i = 1,2$. Since $f \ge 0$, from Lemma \ref{nonnegativity} we have $u_2 \ge 0$ a.e.\ in~$\Omega_2$. Hence the function $v = (u_1 - u_2)^+$ belongs to $W^{1,2}_{\Gamma_{\! 0}^1}(\Omega^1 \cap \Sigma)$ and has an extension, still denoted by~$v$, to $W^{1,2}_{\Gamma_{\! 0}^2}(\Omega^2 \cap \Sigma)$ vanishing identically outside $\Omega^1 \cap \Sigma$. Therefore $v$ is an admissible test-function in Definition~\ref{definition} for $\Omega = \Omega^i$, $i = 1,2$, and we may write
\begin{align*}
\int_{\Omega^1 \cap \Sigma}
Dv(x) \cdot DV(Du_1(x)) \, dx
&=
\int_{\Omega^1 \cap \Sigma}
f(x) \, v(x) \, dx,
\\
\noalign{\medskip}
\int_{\Omega^1 \cap \Sigma}
Dv(x) \cdot DV(Du_2(x)) \, dx
&=
\int_{\Omega^1 \cap \Sigma}
f(x) \, v(x) \, dx.
\end{align*}
By subtracting the second equality from the first one we obtain
$$
\int_{\{\, v > 0 \,\}}
\Big(Du_1(x) - Du_2(x)\Big)
\cdot
\Big(DV(Du_1(x)) - DV(Du_2(x))\Big)
\,
dx
=
0
$$%
Since $V$ is strictly convex by assumption, the Lebesgue measure of the set $\{\, v > 0 \,\}$ must be zero, and the lemma follows.
\end{proof}

In the case when $\Omega = B_R(O,H_0)$ for some $R > 0$ and $f \equiv
1$, Problem~(\ref{Dirichlet}) is explicitly solvable:

\begin{proposition}[Solution in the Wulff shape]\label{solution_WS}
Let\/ $H$ be a norm of class $C^1(\mathbb R^N \setminus \{O\})$,
and suppose that its dual norm $H_0$ also belongs to $C^1(\mathbb R^N \setminus \{O\})$. Let $\Sigma$ be as in~\eqref{cone}. The function $u_R \in C^1(\mathbb R^N)$ given by $u_R(x) = \frac1{\, 2N \,} \, (R^2 - H_0^2(x))$ is a weak solution of the problem
\begin{equation}\label{radial_Dirichlet}
\begin{cases}
-\Delta_{H \,} u = 1 &\mbox{in $B_R(O,H_0) \cap \Sigma$,}
\\
\noalign{\medskip}
u = 0 &\mbox{on $\Sigma \cap \partial B_R(O,H_0)$,}
\\
\noalign{\medskip}
DV(Du) \cdot \nu = 0 &\mbox{on $B_R(O,H_0) \cap \partial \Sigma \setminus \{O\}$.}
\end{cases}
\end{equation}
Furthermore, the gradient $Du_R$ is given by $Du_R(x) = -\frac1N \, H_0(x) \, \DH_0(x)$ for $x \ne 0$ and satisfies $H(Du_R(x)) = \frac1N \, H_0(x)$ for all $x \in \mathbb R^N$.
\end{proposition}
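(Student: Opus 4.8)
The plan is to reduce the statement to explicit computations built on the duality identities of Section~\ref{norms}. Since $H_0$ is itself a norm of class $C^1(\mathbb R^N\setminus\{O\})$, Proposition~\ref{smoothness} applied to $H_0$ gives $\frac12 H_0^2\in C^1(\mathbb R^N)$ with gradient vanishing at the origin; hence $u_R=\frac1{2N}(R^2-H_0^2)\in C^1(\mathbb R^N)$, and the chain rule yields $Du_R(x)=-\frac1N H_0(x)\,\DH_0(x)$ for $x\ne O$ and $Du_R(O)=0$. Using the homogeneity~\eqref{homogeneity} of $H$ together with the unit-vector identity~\eqref{unit}, we get $H(Du_R(x))=\frac1N H_0(x)\,H(\DH_0(x))=\frac1N H_0(x)$ for $x\ne O$, and this is trivially true also at $x=O$; this proves the last claim of the proposition.

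The heart of the argument is the identity $DV(Du_R(x))=-\frac1N x$ for every $x\in\mathbb R^N$. For $x\ne O$, first note that $\DH_0(x)\ne O$, because $x\cdot\DH_0(x)=H_0(x)>0$ by~\eqref{radial_derivative}; writing $\xi:=Du_R(x)=t\,\DH_0(x)$ with $t=-\frac1N H_0(x)<0$, the homogeneity relation~\eqref{differential} applied to $H$ gives $\DH(\xi)=-\DH(\DH_0(x))$, while Lemma~\ref{radial} gives $\DH(\DH_0(x))=x/H_0(x)$. Plugging these into $DV(\xi)=H(\xi)\,\DH(\xi)$ from~\eqref{DV}, and using the value of $H(Du_R)$ just computed, we obtain $DV(Du_R(x))=-\frac1N x$; at $x=O$ the identity reads $DV(0)=0$, which is part of Proposition~\ref{smoothness}. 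Note that, although $DV$ itself is only known to be continuous, the composite field $x\mapsto DV(Du_R(x))=-\frac1N x$ is smooth.

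From $DV(Du_R(x))=-\frac1N x$ everything else follows. Taking the divergence gives $\Delta_H u_R=\mathrm{div}(-\frac1N x)=-1$ on all of $\mathbb R^N$, so $-\Delta_H u_R=1$ classically in $B_R(O,H_0)\cap\Sigma$. On $\partial B_R(O,H_0)$ we have $H_0(x)=R$, hence $u_R=0$ there, in particular on $\Sigma\cap\partial B_R(O,H_0)$. For the condition on $B_R(O,H_0)\cap\partial\Sigma\setminus\{O\}$, I would use that $\Sigma$ is a cone: at each $x\in\partial\Sigma\setminus\{O\}$ the ray $\{tx:t>0\}$ lies in $\partial\Sigma$, so $x$ is tangent to $\partial\Sigma$ and $x\cdot\nu=0$, whence $DV(Du_R)\cdot\nu=-\frac1N x\cdot\nu=0$. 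It remains to see that $u_R$ is a weak solution in the sense of Definition~\ref{definition}: $u_R|_{B_R(O,H_0)}\in W^{1,2}_0(B_R(O,H_0))$ because it is $C^1$ and vanishes on the boundary of the convex set $B_R(O,H_0)$, so $u_R\in W^{1,2}_{\Gamma_0}(B_R(O,H_0)\cap\Sigma)$; and for any admissible test function $v$, Green's formula applied to the smooth field $DV(Du_R)=-\frac1N x$ gives $\int Dv\cdot DV(Du_R)\,dx=\int v\,dx$ plus a boundary term which vanishes on $\Gamma_0$ (zero trace of $v$) and on $B_R(O,H_0)\cap\partial\Sigma$ (since $x\cdot\nu=0$ there). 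The only delicate point is this integration by parts near the vertex $O$, where $B_R(O,H_0)\cap\Sigma$ need not be Lipschitz; but the integrand is smooth and $\{O\}$ is $\mathcal H^{N-1}$-negligible, so the contribution at $O$ is harmless and~\eqref{weak} holds with $f\equiv1$.
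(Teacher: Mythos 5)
Your proposal is correct and follows essentially the same route as the paper: compute $Du_R$ and $H(Du_R)$ from \eqref{unit}, establish the key identity $DV(Du_R(x))=-\frac1N\,x$ via \eqref{DV}, \eqref{differential} and Lemma~\ref{radial}, and then verify the weak formulation by the divergence theorem, using $x\cdot\nu=0$ on $\partial\Sigma\setminus\{O\}$. Your extra remarks (deducing $u_R\in C^1(\mathbb R^N)$ from Proposition~\ref{smoothness} applied to $H_0$, and handling the vertex in the integration by parts) are only slightly more explicit than the paper's argument.
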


\begin{proof}
By differentiation we find $Du_R(x) = -\frac1N \, H_0(x) \, \DH_0(x)$ for $x \ne 0$, and therefore $H(Du_R(x)) \allowbreak = \frac1N \, H_0(x)$ by~\eqref{unit}. The last equality continues to hold at the origin by Proposition~\ref{smoothness}. Let us check that $u_R$ satisfies~\eqref{radial_Dirichlet} in the weak sense. Of course, $u_R$ vanishes by definition when $H_0(x) = R$. Since $u_R \in C^1(\mathbb R^N)$ and the boundary of $\Omega =  B_R(0,H_0)$ also belongs to the class~$C^1$, it follows that $u_R \in W^{1,2}_{\Gamma_0}(\Omega \cap \Sigma)$, where $\Gamma_0$ is as in~\eqref{Gamma01_def}. Furthermore, by~\eqref{DV}, \eqref{differential} and~\eqref{unit} we have $DV(Du_R(x)) = H(Du_R(x)) \, \DH(Du_R(x)) = -\frac1N \, H_0(x) \, \DH(\DH_0(x))$. But then by Lemma~\ref{radial} it follows that $DV(Du_R(x)) = -\frac1N \, x$. We note in passing that $DV(Du_R(x)) \cdot \nu = 0$ pointwise on~$\Omega \cap \partial \Sigma \setminus \{O\}$. To complete the proof we have to show that \eqref{weak} holds. This is peculiar because, although $u_R$ may fail to have second derivatives, the compound function $DV(Du_R(x)) = -\frac1N \, x$ belongs to $C^\infty(\mathbb R^N,\mathbb R^N)$, and therefore by the divergence theorem we have
$$
\int_{\Omega \cap \Sigma}
Dv(x) \cdot DV(Du(x)) \, dx
=
-\frac1{\, N \,} \int_{\Omega \cap \Sigma}
Dv(x) \cdot x \, dx
=
\int_{\Omega \cap \Sigma}
v(x) \, dx
$$%
for every $v \in W^{1,2}_{\Gamma_0}(\Omega \cap \Sigma)$, as claimed.
\end{proof}

\begin{remark}
In the case when $\Sigma = \mathbb R^N$, the solution in the Wulff shape is considered, for instance, in \cite[(1.8)]{Bianchini-Ciraolo} and \cite[Theorem~2.1]{Ferone-Kawohl}.
\end{remark}

\section{Proof of Theorem~\ref{thm_main1}}\label{proof}

Roughly speaking, Theorem~\ref{thm_main1} asserts that if $q(r)$ grows
faster than~$r$ then the solvability of Problem~\eqref{special_pb}-\eqref{overd_cond} implies that $\Omega$ is a Wulff shape centered at the origin. The minimal rate of increase of~$q(r)$ in order to get the result is discovered by letting $R$ vary in Problem~(\ref{radial_Dirichlet}): more precisely, using Proposition~\ref{solution_WS} we find that $H(Du_R(x)) = R/N$ for every $x \in \partial B_R(O,H_0)$, hence the value of $H(Du_R(x))$ at $x \in \partial B_R(O,H_0)$ is proportional to~$R$. This information is transferred to Problem~\eqref{special_pb} by means of the following comparison argument.

\begin{proof}[Proof of Theorem~\ref{thm_main1}]
\textit{Preliminaries.} Define
$$
R_1
=
\min_{z \in \overline \Gamma_0} H_0(z)
,
\qquad
R_2
=
\max_{z \in \overline \Gamma_0} H_0(z)
$$%
and let $u_i$, $i = 1,2$, be the solution of the Dirichlet
problem~(\ref{radial_Dirichlet}) in the Wulff shape $\Omega_i =
B_{R_i}(0,H_0)$. Thus, $\Sigma \cap \Omega_1 \subseteq \Sigma \cap \Omega \subseteq \Sigma \cap \Omega_2$. We
aim to prove that $\Omega_1 = \Omega_2$, which implies the claim of
the theorem. To this purpose, pick $z_i \in \overline \Gamma_0 \cap
\partial \Omega_i$ and observe that $R_i = H_0(z_i)$, $i = 1,2$. Using Lemma \ref{nonnegativity} and 
Lemma~\ref{monotonicity} twice, we get
\begin{equation}\label{ordered}
\mbox{$u_1 \le u$ a.e.\ in $\Sigma \cap \Omega_1$,}
\qquad
\mbox{$u \le u_2$ a.e.\ in $\Sigma \cap \Omega$.}
\end{equation}
\textit{Part~1.} Taking into account that $u_1(z_1) = u(z_1) = 0$ and $u_1$ is continuously differentiable up to~$z_1$, let us check that the first inequality in~\eqref{ordered} implies
\begin{equation}\label{claim1}
\frac{\, R_1 \,}N = H(Du_1(z_1)) \leq q(R_1) \,.
\end{equation}
Letting $x(t) = z_1 - t \, |z_1|^{-1} \, z_1 \in \overline \Sigma \cap \Omega_1$ for $t \in (0, |z_1|)$, we compute the limit
$$
\ell = \lim_{t \to 0^+}
\frac{\, u_1(x(t)) \,}t
$$%
following two different arguments. On the one side, the limit $\ell$ is the radial derivative $\ell = -|z_1|^{-1} \, z_1 \cdot Du_1(z_1)$, and using Proposition~\ref{solution_WS} and equality~\eqref{radial_derivative} we may write
$$
\ell = \frac1N \, |z_1|^{-1} \, R_1^2
.
$$
On the other side, by the mean-value theorem we have $u(x(t)) = -t \, |z_1|^{-1} \, z_1 \cdot Du(\tilde x)$ for a convenient point $\tilde x$ on the segment from $z_1$ to~$x(t)$. Letting $\xi = Du(\tilde x)$ and $x = t \, |z_1|^{-1} \, z_1$ in~\eqref{dual}, and since $H_0(z_1) = R_1$, we may estimate $u(x(t)) \le t \,R_1 \, |z_1|^{-1} \, H(Du(\tilde x))$. Recalling that $u_1 \le u$ by~\eqref{ordered}, and using assumption~\eqref{overd_cond} we arrive at $\ell \le R_1 \, |z_1|^{-1} \, q(R_1)$ and \eqref{claim1} follows.

\textit{Part~2.} By using the second inequality in~(\ref{ordered}), and since $u_2$ is continuously differentiable, taking assumption~\eqref{overd_cond} into account we now prove the inequality
\begin{equation}\label{claim2}
q(R_2) \leq H(Du_2(z_2)) =  \frac{\, R_2 \,}N
.
\end{equation}
The argument is by contradiction: suppose there exists $\epsilon_0 \in (0, |z_2|)$ such that $H(Du(x)) > H(Du_2(z_2)) + \epsilon_0$ for all $x \in U_0 = \{\, x \in \Sigma \cap \Omega : |x - z_2| < \epsilon_0 \,\}$, and choose $x_0 \in U_0$. Observe that $w_0 = u_2(x_0) - u(x_0) > 0$ because the equality $w_0 = 0$ together with $u \le u_2$ implies $Du_2(x_0) = Du(x_0)$, which is not the case. While $\epsilon_0$ is kept fixed, the point~$x_0$ will tend to~$z_2$ in the end of the argument. Since the vector field $\DH(\xi)$ is continuous by assumption in $\mathbb R^N \setminus \{O\}$, for every choice of $x_0 \in U_0$ there exists a local solution $x(t)$, $t > 0$, of the initial-value problem
$$
\begin{cases}
x'(t) = \DH(Du(x(t))),
\\
\noalign{\medskip}
x(0) = x_0.
\end{cases}
$$%
Since the Euclidean norm $|x'(t)|$ is bounded from above by some constant~$M_0$, the length of the arc $\gamma$ described by $x(\tau)$ when $\tau$ ranges in the interval $(0,t)$ satisfies $|\gamma| \le M_0 \, t$, and therefore
\begin{equation}\label{bound1}
|x(t) - z_2| \le M_0 \, t
.
\end{equation}
We claim that the curve $\gamma$ can be extended until $|x(t_0) - z_2| = \epsilon_0$ for some finite $t_0 > 0$. Indeed, by differentiation of $u(x(t))$ we find $du/dt = Du(x(t)) \cdot x'(t)$. Letting $\xi = Du(x(t))$ in Lemma~\ref{radial} we see that the vector $x'(t)$ realizes the supremum in~\eqref{dual}, i.e., we may write the equality
$$
H(Du(x(t)))
=
\frac{\, Du(x(t)) \cdot x'(t) \,}{H_0(x'(t))}
.
$$%
This and~\eqref{unit} imply $du/dt = H(Du(x(t)))$. Since $H(Du(x)) > \epsilon_0$ in~$U_0$, it follows that $u$ increases along~$\gamma$ and therefore the curve, which starts at $x_0 \in U_0$, cannot end on~$\overline \Gamma_0$ where $u = 0$. Similarly, we find $du_2/dt = Du_2(x(t)) \cdot x'(t) \le H(Du_2(x(t)))$ and therefore
\begin{equation}\label{rate}
\frac{\, du_2 \,}{dt} + \epsilon_0 < \frac{\, du \,}{dt}
,
\end{equation}
hence the difference $w(t) = u_2(x(t)) - u(x(t))$ satisfies $dw/dt < -\epsilon_0$. Since $w(t)$ must be positive as long as $x(t) \in U_0$, it follows that
\begin{equation}\label{bound2}
t < w_0/\epsilon_0
\end{equation}
and the length of~$\gamma$ is bounded from above by
\begin{equation}\label{length}
|\gamma| \le M_0 \, w_0 / \epsilon_0
.
\end{equation}
In the case when $\gamma$ reaches prematurely~$\Gamma_1 \subseteq \partial \Sigma$, i.e., if $x(t) \in U_0$ for $t \in [0,t_1)$ and $x(t_1) = x_1 \in \Gamma_1$, with $|x_1 - z_2| < \epsilon_0$, the assumption that $\partial \Sigma$ is of class~$C^1$ ensures the existence of a local solution $x(t) \in \partial \Sigma$, $t > t_1$, to the following initial-value problem:
\begin{equation}\label{surface_problem}
\begin{cases}
x'(t) = \DH(Du(x(t))),
\\
\noalign{\medskip}
x(t_1) = x_1.
\end{cases}
\end{equation}
Indeed, the third condition in~\eqref{special_pb} implies that $\DH(Du(x))$ is a tangent vector to~$\partial \Sigma$ as long as $x \in \partial \Sigma$, and therefore problem~\eqref{surface_problem} admits a local solution lying on the hypersurface~$\partial \Sigma$ and extending the curve~$\gamma$. Since the curve $\gamma$, possibly extended as above, has a finite length by~\eqref{length}, and cannot end either on~$\overline \Gamma_0$ nor on $\Gamma_1$ as long as $|x(t) - z_2| < \epsilon_0$, nor can it have a limiting point in~$U_0$ because $x'(t)$ keeps far from zero and the parameter~$t$ is bounded by~\eqref{bound2}, there must be some finite $t_0 > 0$ such that $|x(t_0) - z_2| = \epsilon_0$, and therefore $|\gamma|\ge \epsilon_0$. This and~\eqref{length} yield the estimate $\epsilon_0^2 \le M_0 \, w_0$, which is uniform in the sense that $M_0$ and~$\epsilon_0$ do not depend on the choice of $x_0 \in U_0$. To conclude the argument, we now let $x_0 \to z_2$: thus, $w_0 \to 0$ while $\epsilon_0$ and~$M_0$ do not change, and a contradiction is reached.

\textit{Conclusion.} By \eqref{claim1} and~\eqref{claim2} we deduce
$$
\frac{\, q(R_2) \,}{\, R_2 \,}
\le
\frac{\, q(R_1) \,}{\, R_1 \,}
.
$$
Since the ratio $q(r)/r$ strictly increases, we must have $R_1 = R_2$ and $\Omega_1 = \Omega_2= \Omega$, as claimed.
\end{proof}

\appendix

\section{A smooth norm with a non-smooth dual norm}\label{example}

A simple example of norm in~$\mathbb R^N$ is the $p$-norm $| x
|_p$ given by
$$
| x |_p
=
\Big(\sum\limits_{k = 1}^N |x_k|^p\Big)^\frac1p\
\mbox{for $p \in [1,+\infty)$}
.
$$
In the special case when $p \in (1,+\infty)$, the dual norm of $|
x |_p$ is the $q$-norm $| \xi |_q$, where $q$ is related
to~$p$ by the equality $\frac1p + \frac1q = 1$. Both norms belong to
the class $C^1(\mathbb R^N \setminus \{O\})$. The dual norm of
$| x |_1$, instead, is $| \xi |_\infty = \max\{\,
|\xi_1|, \ldots, |\xi_N| \,\}$. Neither of the last two norms belongs
to the class $C^1(\mathbb R^N \setminus \{O\})$. In this section
we construct an explicit example of a norm $H_0 \in C^1(\mathbb R^2
\setminus \{O\})$ whose dual norm $H$ does not belong to the
same class. The example below should be compared with \cite[Example A. 1.19]{Cannarsa-Sinestrari}.

\begin{definition}\label{norm}
The norm $H_0$ is defined as the \textit{gauge function},
also called \textit{the Minkowski functional} (see
\cite[Definition~6.3.11]{Krantz-Parks} or \cite[Remark~1,
p.~380]{Simon}), of a convenient, convex, plane domain which plays the
role of the unit ball $B_1(0,H_0)$. Such a ball is defined as the
convex envelope of the four Euclidean discs of radius~$\frac12$
centered at $(\pm \frac12, 0)$, $(0, \, \pm \frac12)$ (see
Figure~\ref{flower}). Notice that the origin belongs to the boundary of
each of the given discs.
\end{definition}

Since the boundary of the ball $B_1(0,H_0)$
defined above is a $C^1$-curve, by Lemma~\ref{differentiability}
the norm $H_0$ belongs to the class $C^1(\mathbb R^2 \setminus \{O\})$. However, we have:

\begin{lemma}\label{appendix}
Denote by $H(\xi)$ the dual norm of the norm $H_0(x)$ in Definition~\ref{norm}.
The unit ball $B_1(0,H)$ is the intersection of four convex open sets,
each bounded by a parabola with focus at the origin and vertex at one
of the four points $(\pm 1,0)$, $(0, \, \pm 1)$.
\end{lemma}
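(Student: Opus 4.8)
\textbf{Proof proposal for Lemma~\ref{appendix}.}

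The plan is to compute the dual norm $H(\xi)$ directly from its variational definition~\eqref{dual}, exploiting the explicit structure of $B_1(0,H_0)$ as the convex hull of the four discs. First I would note that the support function of a convex hull is the pointwise maximum of the support functions of its pieces: since $H(\xi)=\sup_{H_0(x)\le 1} x\cdot\xi$ is exactly the support function of $B_1(0,H_0)$, and $B_1(0,H_0)$ is the convex envelope of the four discs $D_j$, $j=1,\dots,4$, with $D_j$ the Euclidean disc of radius $\tfrac12$ centered at $c_j\in\{(\pm\tfrac12,0),(0,\pm\tfrac12)\}$, we get
\begin{equation}\label{supmax}
H(\xi)=\max_{1\le j\le 4}\ \sup_{x\in D_j} x\cdot\xi=\max_{1\le j\le 4}\Big(c_j\cdot\xi+\tfrac12\,|\xi|\Big).
\end{equation}
So $H$ is the upper envelope of four \emph{affine-plus-$\tfrac12|\xi|$} functions, and $\{H\le 1\}$ is the intersection of the four sublevel sets $\{c_j\cdot\xi+\tfrac12|\xi|\le 1\}$.

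Next I would identify each set $\{\,c_j\cdot\xi+\tfrac12|\xi|\le 1\,\}$ as a parabolic region. Consider, say, $c_1=(\tfrac12,0)$: the boundary equation is $\tfrac12\xi_1+\tfrac12|\xi|=1$, i.e.\ $|\xi|=2-\xi_1$, which upon squaring gives $\xi_1^2+\xi_2^2=4-4\xi_1+\xi_1^2$, that is $\xi_2^2=4-4\xi_1=-4(\xi_1-1)$. This is the parabola with vertex $(1,0)$ opening in the $-\xi_1$ direction; its focus is at the origin (the focus of $\xi_2^2=-4(\xi_1-1)$ lies one unit to the left of the vertex, i.e.\ at $O$), and the region $|\xi|\le 2-\xi_1$ is precisely the convex side (interior) of this parabola, since it consists of the points closer to the focus $O$ than to the directrix $\xi_1=2$. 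Running the same computation for the other three centers produces the parabolas with vertices $(-1,0)$, $(0,1)$, $(0,-1)$, each with focus $O$. Then $B_1(0,H)=\bigcap_{j=1}^4\{H\le 1\}^{\circ}$ is the intersection of the four open convex parabolic regions, which is exactly the claim.

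The one genuine subtlety — the step I expect to be the main obstacle — is the squaring in the derivation of the parabola: the equivalence $\tfrac12\xi_1+\tfrac12|\xi|\le 1 \iff \xi_2^2\le -4(\xi_1-1)$ is only valid where the sign condition $2-\xi_1\ge 0$ holds, i.e.\ $\xi_1\le 2$. I would therefore remark that on the parabola one automatically has $\xi_1\le 1<2$, and more generally that the ``spurious branch'' introduced by squaring lies entirely in $\{\xi_1>2\}$ and is irrelevant; alternatively, one observes directly from~\eqref{supmax} that $c_j\cdot\xi+\tfrac12|\xi|$ is a nonnegative convex function whose sublevel set is automatically convex, so $\{H\le 1\}$ is convex and bounded, and identifying its boundary arc-by-arc with the four parabolic arcs suffices. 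Finally, I would record that this $H$ is \emph{not} of class $C^1(\mathbb R^2\setminus\{O\})$: at the four points where consecutive parabolas meet, $\partial B_1(0,H)$ has a corner, so by Lemma~\ref{differentiability} (equivalence of \ref{2} and \ref{3}) $H\notin C^1$, which is the point of the example.
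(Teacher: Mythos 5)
Your proof is correct, but it takes a genuinely different route from the paper's. You compute the dual norm directly as the support function of the unit ball, using the facts that the support function of a convex hull is the pointwise maximum of the support functions of its pieces and that the support function of the disc of radius $\tfrac12$ centered at $c_j$ is $c_j\cdot\xi+\tfrac12|\xi|$; the parabolas then drop out of the focus--directrix description of each sublevel set $\{|\xi|\le 2-c\cdot\xi\}$, and your handling of the squaring step (the spurious branch lives in $\{\xi_1>2\}$, which is excluded automatically since $2-\xi_1\ge|\xi|\ge 0$) is sound. The paper instead parametrizes $\partial B_1(0,H)$ as the image of $\partial B_1(0,H_0)$ under the gradient map $x\mapsto \DH_0(x)$ (using~\eqref{unit}), computes $|\DH_0(P_\vartheta)|$ via the inscribed-angle theorem on the circle of radius $\tfrac12$, and recognizes the resulting parametric curve $\xi_1(\vartheta)=\frac{2\cos\vartheta}{1+\cos\vartheta}$, $\xi_2(\vartheta)=\frac{2\sin\vartheta}{1+\cos\vartheta}$ as the parabola $\xi_1=1-\tfrac14\xi_2^2$. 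Your argument is shorter, purely convex-analytic, and yields a closed formula $H(\xi)=\max_j\bigl(c_j\cdot\xi+\tfrac12|\xi|\bigr)$ for the dual norm itself, not just its unit ball; the paper's argument is more computational but makes explicit the duality map $\DH_0$ between the two unit spheres, which connects to Lemma~\ref{radial} and the geometric picture the authors want to convey. Both arguments land on the same vertices and focus, and your closing remark about the corners at $(\pm\overline\xi,\pm\overline\xi)$ reproduces the discussion the paper places just before its proof.
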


Before proving the lemma, we notice that the boundary $\partial B_1(0,H)$
of the unit ball described in the statement has a corner at each of
the four points $(\pm \overline \xi, \, \pm \overline \xi)$,
$\overline \xi = 2 \, (\sqrt{2 \,} - 1)$. To see this, let us consider
the parabola $\gamma$ with focus at~$O$ and vertex at $(1,0)$. This
line is the graph of the function $\xi_1(\xi_2) = 1 - \frac14 \,
\xi_2^2$ whose derivative satisfies $\xi_1'(\overline \xi) = -\frac12
\, \overline \xi = 1 - \sqrt{2 \,} > -1$. Hence $\gamma$ is not
orthogonal to the straight line $\xi_1 = \xi_2$ at the point of
intersection $(\overline \xi, \overline \xi)$, and consequently the
boundary $\partial B_1(0,H)$ must have a corner there. Thus, the dual
norm~$H$ of the given norm~$H_0$ does not belong to the class
$C^1(\mathbb R^2 \setminus \{O\})$.

\begin{figure}[h]
\centering
\begin{picture}(150,130)(73,4)
\includegraphics{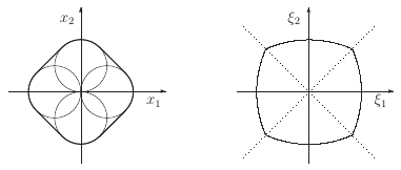}
\end{picture}
\caption{The ball $B_1(0,H_0)$ (left) is smooth, its dual (right) is not.}
\label{flower}
\end{figure}

\begin{proof}[Proof of Lemma~\ref{appendix}]
Let us describe the boundary of the ball $B_1(0,H)$ in parametric
form. Passing to polar coordinates $\rho,\vartheta$ related to
$\xi_1,\xi_2$ by $\xi_1 = \rho \, \cos \vartheta$, $\xi_2 = \rho \,
\sin \vartheta$, for every $\vartheta \in (-\frac\pi4, \frac\pi4)$ we
compute the Euclidean norm $\rho(\vartheta) = | v_\vartheta |$
of the unique vector $v_\vartheta = \rho(\vartheta) \, (\cos
\vartheta, \allowbreak \sin \vartheta)$ satisfying $H(v_\vartheta) =
1$. To this purpose it is enough to locate the point $P_\vartheta \in
\partial B_1(0,H_0)$ where the outer normal~$\nu$ equals $(\cos
\vartheta, \sin \vartheta)$: indeed, due to~\eqref{unit}, we have $H(\DH_0(P_\vartheta)) = 1$ and therefore we
may take $v_\vartheta = \DH_0(P_\vartheta)$.
Recalling that $H_0(P_\vartheta) = 1$, the radial derivative $\partial
H_0/\partial r$, $r = \sqrt{x_1^2 + x_2^2 \,}$, is easily computed at
$P_\vartheta$ by~(\ref{radial_derivative}) and~(\ref{differential}):
$$
\frac{\, \partial H_0 \,}{\, \partial r \,}(P_\vartheta)
=
| P_\vartheta |^{-1}
.
$$%
Now the construction of $B_1(0,H_0)$ comes into play: since the
origin~$O$ and the point~$P_\vartheta$ belong to the circumference of
radius~$\frac12$ centered at $(\frac12, 0)$, by a classical theorem in
Euclidean geometry we get that the line segment $OP_\vartheta$ makes
an angle $\alpha = \vartheta/2$ with the $x_1$-axis (see
Figure~\ref{Euclid}), and therefore $| P_\vartheta | = \cos
\alpha = \cos(2\vartheta)$.
\begin{figure}[h]
\centering
\begin{picture}(200,160)(12,5)
\includegraphics{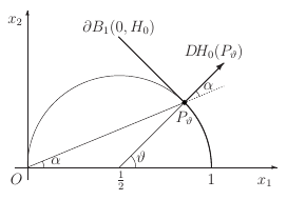}
\end{picture}
\caption{Finding the Euclidean norm of $\DH_0(P_\vartheta)$}
\label{Euclid}
\end{figure}
Finally, since the radial derivative is the projection of the gradient
in the radial direction, we have
$$
\frac{\, \partial H_0 \,}{\, \partial r \,}(P_\vartheta)
=
| \DH_0(P_\vartheta) | \, \cos \alpha,
$$%
and hence
$$
| \DH_0(P_\vartheta) |
=
\frac1{\, \cos \alpha \,}
\,
\frac{\, \partial H_0 \,}{\, \partial r \,}(P_\vartheta)
=
\frac2{\, 1 + \cos \vartheta \,}
.
$$%
Thus, the components $\xi_1(\vartheta),\xi_2(\vartheta)$
of~$v_\vartheta$ are given by
$$
\begin{cases}
\xi_1(\vartheta)
=
\displaystyle
\frac{2 \, \cos \vartheta}{\, 1 + \cos \vartheta \,};
\\
\noalign{\medskip}
\xi_2(\vartheta)
=
\displaystyle
\frac{2 \, \sin \vartheta}{\, 1 + \cos \vartheta \,}.
\end{cases}
$$%
The parametric equations given above describe the parabola whose
Cartesian equation is $\xi_1 = 1 - \frac14 \, \xi_2^2$, which passes
through the points $(0, \, \pm 2 \, (\sqrt{2 \,} - 1))$ and has focus
at the origin and vertex at~$(1,0)$. The remaining parts of $\partial
B_1(0,H)$ are managed similarly, and the lemma follows.
\end{proof}

\pdfbookmark{Acknowledgments}{Acknowledgments}
\section*{Acknowledgments} The authors are members of the Gruppo
Nazionale per l'A\-na\-li\-si Matematica, la Pro\-ba\-bi\-li\-t\`a e
le loro Applicazioni
(\href{https://www.altamatematica.it/gnampa/}{GNAMPA}) of the Istituto
Na\-zio\-na\-le di Alta Ma\-te\-ma\-ti\-ca
(\href{https://www.altamatematica.it/en/}{INdAM}). G. Ciraolo has been partially supported by the PRIN 2017 project ``Qualitative and quantitative aspects of nonlinear PDEs''.
A.\ Greco is
partially supported by the research project {\em Evolutive and stationary Partial Differential Equations with a focus on bio-mathematics}, funded by
\href{https://www.fondazionedisardegna.it/}{Fondazione di Sardegna}
(2019). 

\pdfbookmark{References}{References}

\bigskip

\hbox{
\vbox to 0pt{\noindent Giulio Ciraolo\\
  Department of Mathematics ``Federigo Enriques''\\
  Universit\`a degli Studi di Milano\\
  Italy\\
  e-mail: giulio.ciraolo@unimi.it\vss}}%
\hbox to 1.55\textwidth{\hss
\vbox to 21.4mm{\vss\noindent Antonio Greco\\
  Department of Mathematics and Computer Science\\
  Universit\`a degli Studi di Cagliari\\
  Italy\\
  e-mail: greco@unica.it}}

\end{document}